\documentclass[11pt]{amsart}
\usepackage{amsxtra}
\usepackage{amssymb,mathrsfs}
\usepackage{mathtools,yhmath}
\usepackage{color}
\usepackage[all]{xy}
\usepackage{tikz,graphicx}
\theoremstyle{plain}

\newcommand{\cleqn}{\setcounter{equation}{0}}
\newcommand{\clth}{\setcounter{theorem}{0}}
\newcommand {\sectionnew}[1]{\section{#1}\cleqn\clth}
\newcommand{\nn}{\hfill\nonumber}
\newtheorem{theorem}{Theorem}[section]
\newtheorem{lemma}[theorem]{Lemma}
\newtheorem{definition-theorem}[theorem]{Definition-Theorem}
\newtheorem{proposition}[theorem]{Proposition}
\newtheorem{corollary}[theorem]{Corollary}
\newtheorem{definition}[theorem]{Definition}
\newtheorem{example}[theorem]{Example}
\newtheorem{remark}[theorem]{Remark}
\newtheorem{conjecture}[theorem]{Conjecture}

\newcommand \bth[1] { \begin{theorem}\label{t#1} }
\newcommand \ble[1] { \begin{lemma}\label{l#1} }

\newcommand \bpr[1] { \begin{proposition}\label{p#1} }
\newcommand \bco[1] { \begin{corollary}\label{c#1} }
\newcommand \bde[1] { \begin{definition}\label{d#1}\rm }
\newcommand \bex[1] { \begin{example}\label{e#1}\rm }
\newcommand \bre[1] { \begin{remark}\label{r#1}\rm }
\newcommand \bcj[1] { \begin{conjecture}\label{j#1}\rm }

\renewcommand {\eth} { \end{theorem} }
\newcommand {\ele} { \end{lemma} }

\newcommand {\epr} { \end{proposition} }
\newcommand {\eco} { \end{corollary} }
\newcommand {\ede} { \end{definition} }
\newcommand {\eex} { \end{example} }
\newcommand {\ere} { \end{remark} }
\newcommand {\ecj} { \end{conjecture} }

\newcommand {\enota} { \end{notation} }
\newcommand \thref[1]{Theorem \ref{t#1}}
\newcommand \leref[1]{Lemma \ref{l#1}}

\newcommand \deref[1]{Definition \ref{d#1}}
\newcommand \exref[1]{Example \ref{e#1}}
\newcommand \reref[1]{Remark \ref{r#1}}
\newcommand \lb[1]{\label{#1}}
\newtheorem*{maintheorem*}{Main Theorem}
\newtheorem*{theorem*}{Theorem}
\newtheorem*{theoremA*}{Theorem A}
\newtheorem*{theoremB*}{Theorem B}
\newtheorem{thmintro}{Theorem}

\def \Cset {{\mathbb C}}
\def \KK {{\mathbb K}}
\def \Zset {{\mathbb Z}}
\def \Nset {{\mathbb N}}
\def \Qset {{\mathbb Q}}

\def \B  {{\widetilde{B}}}
\newcommand \ex {{\bf{ex}}}
\newcommand \fr {{\bf{fr}}}

\def \de {\delta}

\def \la {\lambda}

\def \Om {\Omega}

\def \de {\delta}

\def \sig {\sigma}

\def \mt  {\mapsto}
\def \ci  {\circ}           % duals
\def \bu {\bullet}

\def \wh {\widehat}

\def \deg { {\mathrm{wt}} }
\def \Id { {\mathrm{Id}} }
\def \id { {\mathrm{id}} }

\def \Lie { {\mathrm{Lie \,}} }

\DeclareMathOperator \Span { {\mathrm{Span}} }

\DeclareMathOperator \range {\mathrm{range}}

\DeclareMathOperator \diag { {\mathrm{diag}} }

\DeclareMathOperator \MaxSpec { {\mathrm{MaxSpec}} }

\renewcommand \max { {\mathrm{max}} }

\newcommand*\circled[1]{\tikz[baseline=(char.base)]{
            \node[shape=circle,draw,inner sep=2pt] (char) {#1};}}
\begin{document}
\title
[Maximal green sequences]
{Maximal green sequences for \\ Quantum and Poisson CGL extensions}
\author[Milen Yakimov]{Milen Yakimov}
\address{
Department of Mathematics, Northeastern University, Boston, MA 02115 and International Center for Mathematical Sciences, Institute of Mathematics and Informatics \\
Bulgarian Academy of Sciences \\ 
Acad. G. Bonchev Str., Bl. 8 \\
Sofia 1113, Bulgaria}
\email{m.yakimov@northeastern.edu}
\dedicatory{Dedicated to Dan Nakano on his 60th birthday with admiration}
\keywords{Cluster algebras, $c$-vectors, maximal green sequences, quantum and Poisson Cauchon--Goodearl--Letzter extensions}
\subjclass[2010]{Primary: 13F60, Secondary: 17B37, 53D17}
\begin{abstract} We prove that the quantum and classical cluster algebras for all members of the axiomatically defined classes of symmetric quantum and Poisson Cauchon--Goodearl--Letzter extensions possess maximal green sequences in the sense of Keller. Previously, maximal green sequences were constructed for explicit families of cluster algebras; many of those can be recovered from the general result for CGL extensions. 
\end{abstract}
\maketitle
%%%%%%%%%%%%%%%%%%%%%%%%%%%%%%%%%%%%%%%%%%%%%%%%%%%%%%%%%%%%%%%%%%%%%%%%%%%
\sectionnew{Introduction}
\lb{Intro}
\subsection{Setting}
\label{1.1}
Cluster Algebras were invented by Fomin and Zelevinsky in 2001 and were envisioned as a tool for the qualitative study of canonical bases of quantum groups and total positivity in algebraic varieties \cite{Fo}. Within a short time, they found a broad range of applications in many areas of mathematics and mathematical physics. Maximal green sequences were invented by Keller in 2010 and were implicitly used by Gaiotto, Moore and Neitzke in \cite{GMN}. They have a simple combinatorial definition: A maximal green sequence is a sequence of mutations of an exchange matrix at green indices (i.e., having non-negative $c$-vectors) is performed until all indices become red (i.e., having non-positive $c$-vectors). A reddening sequence (also called a green-to-red sequence) is a more general notion of a sequence of mutations at both green and red indices until all vertices become red. These notions have a number of applications: 
\begin{enumerate}
\item Keller \cite{K1} proved that pairs of reddening sequences with the same initial and end seeds yield dilogarithm identities.
\item Gross, Hacking, Keel and Kontsevich \cite{GHKK} proved that if a cluster algebra possesses a reddening sequence, then the Fock--Goncharov duality conjectures \cite{FG} hold for it. So, the corresponding upper cluster algebra has a basis parametrized by the tropical points of a cluster Poisson variety. 
\item Existence of a reddening sequence is equivalent to injective reachability, which plays a key role in Qin's construction of common triangular bases \cite[Sect. 2.3]{Q}.
\end{enumerate}
Several general properties of the two notions were obtained, which include the following:
\begin{enumerate}
\item[(1')] Muller \cite{M} proved that the existence of a reddening sequence is preserved under mutation, but the same is not true for maximal green sequences. 
\item[(2')] Br\"ustle, Hermes, Igusa and Todorov \cite{BHIT} proved that maximal green sequences remain such under cyclic permutations. They also proved several key conjectures, including a finiteness result on the number of maximal green sequences for a valued quiver that is mutation equivalent to an acyclic quiver. 
\item [(3')] Br\"ustle, Dupont and P\'erotin \cite{BDP} proved that the end quiver of a maximal green sequence is isomorphic to the original under some permutation of its vertices.  
\end{enumerate}
We refer the reader to Keller's comprehensive review \cite{K3} for many additional results. 
\medskip

Due to the importance of maximal green and reddening sequences, a large body of research was done on their construction for many concrete families of cluster algebras:
\begin{enumerate}
\item[(a)] mutation finite quivers and Teichmuller theory \cite{B,BMi,GS,Mi},
\item[(b)] double Bruhat cells and Grassmannians \cite{W1,W2}.
\item[(c)] double Bott--Samelson cells \cite{SW}. 
\end{enumerate}

Maximal green sequences for $A_n$ quivers were classified in \cite{GM}.
The existence property for maximal green and reddening sequences was shown to be inherited under taking subquivers \cite{M}. A reverse implication that a triangular extension of quivers with maximal green sequences has the same property was proved in \cite{CL}. Gluing procedures were obtained in \cite{BM,BMRYZ}. 

\subsection{CGL extensions} 
Symmetric quantum and Poisson Cauchon--Goodearl--Letzter (CGL) extensions are large axiomatically defined classes of noncommutative and Poisson algebras that have PBW bases 
$\{ x_1^{m_1} \ldots x_N^{m_N} \mid m_1, \ldots, m_N \in \Nset \}$ such that for all $j <k$,
\begin{align*}
x_k x_j &\in \la_{kj} x_j x_k + \Span \{  
x_{j+1}^{m_{j+1}} \ldots x_{k-1}^{m_{k-1}}
\mid m_{j+1}, \ldots, m_{k-1} \} \quad \mbox{and} \\
\{x_k, x_j\} &\in \la_{kj} x_j x_k + \Span \{  
x_{j+1}^{m_{j+1}} \ldots x_{k-1}^{m_{k-1}}
\mid m_{j+1}, \ldots, m_{k-1} \}, 
\end{align*}
respectively, for certain scalars $\lambda_{kj}$ coming from a torus action.

The two classes of CGL extensions appear to be very large. No classification results are known except for very low dimensions. It was proved in \cite{LM} that the Poisson CGL extensions coming from Lie theory only cover a small portion of the class of Poisson CGL extensions. Section \ref{prelim-CGL} contains a brief review of the theory of CGL extensions.

In \cite{GY0,GY-Memo,GY-memo2} it was proved that every symmetric quantum or Poisson CGL extension satisfying mild assumptions possesses a quantum (classical) cluster algebra structure, which coincides with the corresponding upper quantum (classical) cluster algebra, see Theorems \ref{tcluster} and \ref{tcluster-P}. 
Our main result is that every member of these large axiomatic classes of cluster algebras possesses (multiple) maximal green sequences.

\begin{thmintro} 
\label{thmA}
Each quantum (classical) cluster algebra on a symmetric quantum (Poisson) CGL extension of dimension $N$ possesses maximal green sequences parametrized by all reduced expressions $\boldsymbol{w}$ of the longest element of the symmetric group $S_N$ with the property that its  initial subwords are such that 
$\boldsymbol{w}_{\leq k}(\{1,\ldots,j\})$ is an interval in $\{1, \ldots, N\}$ for all $2 \leq j,k \leq N-1$.  
\end{thmintro}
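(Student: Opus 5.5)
The plan is to build on the Goodearl--Yakimov construction of the quantum (Poisson) cluster structure on a symmetric CGL extension $R$ of length $N$ (Theorems \ref{tcluster} and \ref{tcluster-P}). For every $\tau \in S_N$ with $\tau(\{1,\ldots,k\})$ an interval for all $k$, the extension $R$ has a CGL presentation with generators in the order $x_{\tau(1)}, \ldots, x_{\tau(N)}$. Each such presentation gives a seed whose cluster variables are the sequence of prime elements $y_{\tau,1}, \ldots, y_{\tau,N}$; the frozen variables are those attached to the homogeneous prime elements of the full algebra.

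I would first recall the key mutation fact from \cite{GY-Memo}. If $\tau' = \tau (k, k+1)$ and both $\tau$, $\tau'$ satisfy the interval condition, then the seeds for $\tau$ and $\tau'$ differ by a single mutation, or coincide when the relevant index is frozen (or the level sets force equality). A reduced expression $\boldsymbol{w}$ of $w_0$ whose initial subwords satisfy the interval property gives a chain $\mathrm{id} = \tau_0, \tau_1, \ldots, \tau_{\ell} = w_0$. This chain yields a sequence of mutations from the initial seed $\Sigma_{\mathrm{id}}$ to $\Sigma_{w_0}$, one for each step that is not trivial. That is exactly the sequence claimed to be maximal green.

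The second step computes $c$-vectors. I would use the framed (principal coefficient) version, in which $c$-vectors are read off from the degrees of the $y$'s with respect to the torus grading. The cleanest route is a tropical interpretation: the $g$-vectors or leading terms of $y_{\tau,k}$ are computed in the PBW basis through the leading monomials $x^{e}$ given in \cite{GY-Memo}. The exchange at step $j$ has as its $c$-vector, up to sign, the exponent difference between the old and new cluster variable. I would aim to show that this difference is supported on $\{1,\ldots,N\}$ in a way that makes it nonnegative. The reason is that the element $y_{\tau_{j-1},k}$ being mutated is, in the initial cluster's leading-term order, an ancestor built from the indices $\tau(1..k)$. The sign-coherence of $c$-vectors (GHKK) then reduces greenness to checking one coordinate.

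The main obstacle is the final redness. After all mutations, the seed $\Sigma_{w_0}$ corresponds to the reversed CGL presentation $x_N, \ldots, x_1$. In it, the $y_{w_0,k}$ are the prime elements of the subalgebras generated by the last $k$ generators, which are exactly the "opposite" initial-interval elements. I would verify that every mutable $c$-vector of this final seed is nonpositive by an explicit formula. The leading term of $y_{w_0,k}$ should be the product of the initial cluster variables raised to powers forming a unitriangular matrix inverse to the initial one, so the $C$-matrix becomes $-P$ for a permutation $P$. This matches the Br\"ustle--Dupont--P\'erotin picture.

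Finally I would check two further points. Each intermediate mutation is at a green index, which follows from an inductive leading-term comparison using the interval condition on $\boldsymbol{w}_{\leq k}$. Non-mutable steps, such as swaps involving frozen prime elements, are simply skipped. The Poisson case follows verbatim by the same leading-term arguments in the Poisson CGL setting, with Theorem \ref{tcluster-P} in place of Theorem \ref{tcluster}.
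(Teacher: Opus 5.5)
\paragraph{Setup.} Your first step matches the paper's setup: a reduced word of $w_\circ$ whose initial subwords lie in $\Xi_N$ gives a contiguous path of seeds $\Sigma_{\boldsymbol{w}_{\le i}}$, and Theorem~\ref{tcluster}(b) converts it into a mutation sequence. One correction. The skipped steps are exactly the swaps of two entries lying in different level sets of $\eta$. A swap inside a level set always mutates at the exchangeable index $k_\bullet$, which has a successor in its level set, and never at a frozen one.

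\paragraph{The gap.} You never actually control the $c$-vectors, and the mechanism you propose cannot do so.
\begin{itemize}
\item The $c$-vectors are those of the principal part $B$ with principal coefficients, which is a purely combinatorial invariant of the mutation sequence. They are not encoded in $R$, whose frozen variables are the primes $p_k$ with $k[1]=\infty$, not principal coefficients.
\item The torus grading is blind to them. Every exchange relation in $R$ is homogeneous, so every $\hat y_j$ has weight $0$, and $X(T)$ typically has rank far below $N$.
\item A $c$-vector is also not an exponent difference (leading term or $g$-vector) of the exchanged cluster variables. In principal coefficients one has $g_k+g'_k=\sum_i[-\varepsilon b_{ik}]_+\,g_i-B\,[-\varepsilon c_k]_+$, so $c_k$ enters only through $B$. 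Since $B$ is often singular (with a single exchangeable index, $B=0$), $c_k$ is frequently invisible there.
\end{itemize}
Consequently, your claims that each mutation vertex is green ``by an inductive leading-term comparison'' and that the final $C$-matrix ``should be'' $-P$ are assertions, not arguments. Sign coherence only says one nonzero coordinate decides the sign, and you give no way to compute any coordinate.

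\paragraph{The missing idea.} What you need is part (c) of Theorems~\ref{tcluster} and \ref{tcluster-P}, read with the sign convention of Remark~\ref{rnegative}. It gives the local shape of the exchange matrix at every mutation vertex $k_\bullet$:
\begin{itemize}
\item the only incoming arrows are simple arrows from $k_\bullet[\pm 1]$;
\item all other arrows go out to vertices in different level sets of $\eta$.
\end{itemize}
Combined with the contiguous-path structure, this makes $\mathrm{seq}(\boldsymbol{\nu})$ a full layered $T$-system (Proposition~\ref{pCGLtoTsyst}). A purely combinatorial induction on the framed quiver (Theorem~\ref{tind-Tsyst}) then shows two things. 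First, no arrows ever appear between $\eta^{-1}(s)$ and $(\eta^{-1}(t))'$ for $s\neq t$. Second, each $t$-truncation evolves exactly like the framed quiver of $\overleftarrow{A}(\eta^{-1}(t))$ under the truncated sequence.

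Greenness at each step follows, since the mutation vertex has only outgoing arrows to frame vertices. Redness at the end follows from the type $A$ case, Lemma~\ref{lAnmutations}. The CGL algebra enters only through parts (b) and (c) of the Goodearl--Yakimov theorems; no leading terms, gradings or sign coherence are needed.
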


%Since the initial and end seeds of the maximal green sequences in the theorem are the same, all symmetric CGL extensions give dilogarithm identities by Keller's method \cite{K1}. 

The theorem implies the existence of maximal green sequences for many explicit families of (quantum) cluster algebras that have attracted strong interest:
\begin{enumerate}
    \item The integral forms of the quantum unipotent cells of all quantized universal enveloping algebras of symmetrizable Kac-Moody algebras are integral forms of  symmetric quantum CGL extensions \cite{GY2}.  
    \item The quantum double Bruhat cells of all quantized coordinate rings of complex simple algebraic groups are symmetric quantum CGL extensions \cite{GY-BZ}.
    \item The coordinate rings of all full flag varieties and all flag varieties of type $A$ are obtained from the cluster structures in item (1) by changes in the frozen variables \cite{Fr,GLS0}.
    \item The (integral forms) of the (quantized) coordinate rings of complex simple algebraic groups are obtained from the (upper) cluster structures in item (2) by changes in the frozen variables \cite{O,OQY,QY}.
    \item All Bott--Samelson cells are symmetric Poisson CGL extensions \cite{EL}.  
    \item The Poisson geometry of mixed Poisson structures (including double Bott--Samelson varieties) was studied in detail in \cite{LMo}. It is very likely that they lead to large families of symmetric Poisson CGL extensions. 
\end{enumerate}

Theorem \ref{thmA} is proved in Section \ref{resCGL} and is  derived from a more general result on sequences of mutations, which we call {\em{full layered $T$-systems}}. These are sequences of mutations of valued quivers whose sets of vertices $S$ are totally ordered and partitioned, and have the properties that (1) at each step the incoming arrows to the mutation vertex are from the immediate predecessor and successor in the same stratum and (2) the mutation steps at the vertices of a given stratum $S'$ are $|S'| (|S'| -1)/2$ and are organized in a certain $A$-type mutation pattern, see Definitions \ref{dTsyst} and \ref{dTsyst2}. Here and below $|S'|$ denotes the cardinality of a finite set. 

\begin{thmintro} 
\label{thmB}
Every full layered $T$-system is a maximal green sequence. 
\end{thmintro}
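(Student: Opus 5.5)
Since Definitions \ref{dTsyst} and \ref{dTsyst2} are not yet displayed, I will work with the description in the introduction. The mutation sequence is indexed by the strata $S'$ of the partition. For each stratum, the mutations at its vertices follow the $A$-type pattern of length $|S'|(|S'|-1)/2$. At each step, the only arrows into the mutation vertex come from its immediate predecessor and successor in the same stratum.

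The plan is to follow the $c$-vectors through the sequence. I will use the principal framed (extended) valued quiver, in which the $c$-vectors are the rows of the bottom block of the extended exchange matrix and obey the tropical mutation rule. Two things must be shown. First, every mutation in the sequence is at a green vertex. Second, after the last step every $c$-vector is non-positive, i.e. all vertices are red.

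\emph{Step 1 (explicit formula).} I will guess and prove, by induction on the number of mutation steps, an explicit form for all $c$-vectors at every intermediate stage. The expected form is as follows.
\begin{itemize}
\item Order the vertices of a stratum $S'=\{s_1<\dots<s_m\}$ and identify them with the $A_m$-type pattern, i.e. with a reduced word for the longest element of $S_m$.
\item After $k$ steps inside that stratum, each $c$-vector at a vertex of $S'$ should be $\pm(e_{s_a}+e_{s_{a+1}}+\dots+e_{s_b})$, a signed interval sum of basis vectors.
\item The sign should be $+$ exactly for the intervals not yet "inverted" by the initial subword.
\item Vertices in strata not yet processed should keep their original $c$-vectors $e_s$.
\item Vertices in strata already completed should carry $-e_{s'}$ for a suitable (permuted) $s'$ in the same stratum.
\end{itemize}
The key input is condition (1). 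Since the only incoming arrows at the mutation vertex come from neighbours in the same stratum, the tropical mutation rule $c_j' = c_j + [b_{kj}]_+ c_k$ (for $c_k$ green) changes only the $c$-vectors of same-stratum neighbours. Those change by adding $c_k$, which merges adjacent intervals, exactly as in the classical $A_m$ computation of $c$-vectors along the reduced word $s_1 s_2 s_1 \cdots$. The mutated vertex itself gets $-c_k$. Arrows to vertices of other strata are outgoing, so with $c_k$ positive those $c$-vectors are not modified.

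\emph{Step 2 (greenness and final redness).} Given the formula, each mutation vertex has a positive interval $c$-vector at the moment it is mutated. This is the usual fact that in the linear $A_m$ pattern the vertex mutated next corresponds to a positive root that is not yet inverted. Hence every step is green. After all $m(m-1)/2$ mutations of a stratum, the longest element of $S_m$ inverts all positive roots, so every $c$-vector indexed by $S'$ is negative. Since later strata never alter it, as shown in Step 1, the stratum stays red. After the last stratum all vertices are red, which is maximality.

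\emph{Main obstacle.} The hardest point is the cross-strata bookkeeping. The mutations change the quiver, including arrows between different strata. I must check that the hypothesis "incoming arrows only from same-stratum neighbours" is really what the definition guarantees at every step. I also need to handle the valued, non-simply-laced, case: there $[b_{kj}]_+$ may exceed $1$, so the "intervals" become weighted sums. I expect the definitions to force the relevant in-stratum arrows to have weight $1$, or to use a sign-coherence argument (the $c$-vectors are sign-coherent). In the weighted case I would replace explicit intervals by the statement that each $c$-vector is supported on $S'$ with a fixed sign, determined by the $A$-type pattern.
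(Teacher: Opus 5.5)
Your argument is the paper's, phrased through the tropical $c$-vector recursion instead of framed-quiver arrows. Take a green vertex $k$ whose only incoming arrows are the unlabeled ones from $k[\pm1]$. Mutating at $k$ negates $c_k$, adds $c_k$ to $c_{k[-1]}$ and $c_{k[1]}$, and leaves every other $c$-vector unchanged. Hence the $c$-vectors of $\eta^{-1}(t)$ stay supported on $(\eta^{-1}(t))'$ and depend only on the $t$-truncation of the sequence. This is exactly what \thref{ind-Tsyst}(a),(b') records, since the arrows between $\eta^{-1}(t)$ and the frame are the $c$-vectors, before \leref{Anmutations} is invoked.

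Your ``main obstacle'' is not an obstacle. \deref{Tsyst}(1) is imposed on every $Q(B_i)$, and the incoming arrows are required to be unlabeled. So $[b_{kj}]_+\in\{0,1\}$, the labels on outgoing arrows never enter, and you need neither weighted intervals nor sign-coherence.

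You should state your sign dictionary explicitly. Your claim needs $[b_{kj}]_+>0$ exactly for arrows $j\to k$, i.e. frame arrows $i\to i'$ with green meaning arrows into the frame, as in the paper's proof (cf.\ \reref{negative}). With the opposite dictionary, outgoing arrows would propagate $c_k$, and the statement already fails for $1\to 2$ with singleton strata.

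There are two slips to fix.

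First, the strata are not processed one after another. \deref{Tsyst2} only orders the mutations inside each stratum, so ``unprocessed/completed strata'' should be replaced by the truncation invariant above.

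Second, the count you took from the introduction is off by one, and as written your final-redness claim is false. For $m=2$, a single mutation leaves $c_2=e_1+e_2$ green. A stratum with $m$ vertices is mutated along a shuffle of lists of lengths $m,m-1,\dots,1$, i.e. $m(m+1)/2$ times. The model is type $A_m$, with Weyl group $S_{m+1}$, and its positive roots are exactly your intervals $e_{s_a}+\dots+e_{s_b}$ with $a\le b$.

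With this corrected, Step 2 closes quickly; relabel the stratum's indices by position.
\begin{itemize}
\item The update is $C\mapsto CM_k$, where $M_k$ is the matrix of $s_k$ in the simple-root basis.
\item So the $c$-vector at the $i$-th mutation in the stratum is $s_{k_1}\cdots s_{k_{i-1}}(\alpha_{k_i})$, which is positive as long as the word stays reduced.
\item The shuffles allowed by \deref{Tsyst2} are exactly the commutation class of $s_1s_2\cdots s_m\,s_1\cdots s_{m-1}\cdots s_1$, a reduced word for $w_\circ\in S_{m+1}$.
\item Therefore every step is green, and at the end $c_{s_j}=-e_{s_{m+1-j}}$, so every vertex is red.
\end{itemize}
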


Theorem \ref{thmB} is proved in Section \ref{Tsystres}.
\medskip

\noindent
{\bf{Remark.}}

(1) It follows from \thref{ind-Tsyst} and \reref{restr} that (i) each full layered $T$-system is a component preserving sequence of mutations in the sense of Bucher, Machacek, Runburg, Yeck and Zewde \cite{BMRYZ}, (ii) at each step the valued quiver is a union of Dynkin graphs of type $A$ with some orientations and various additional valued arrows between them and (iii) the layered $T$-system is itself a shuffle of $A$-type mutation sequences. Note that the collection of quivers with the described properties in (ii) is very general and contains all quivers for double Bruhat cells \cite{BFZ05}; in fact, if one takes the finest partition, all valued quivers arise this way.

(2) Theorem \ref{thmB} is not a consequence of \cite[Theorem 3.11 and Corollary 3.12]{BMRYZ} for two reasons. Firstly, the definition of component preserving mutation sequence in \cite{BMRYZ} requires (*) knowledge of the arrows between the mutation vertex at each step and the frozen vertices. In the definition of layered $T$-system, we only have (**) assumptions on arrows between exchangeable vertices. The properties (*) are only obtained in the proof of \thref{ind-Tsyst} which is a necessary stronger form of Theorem \ref{thmA} for an inductive argument. Secondly, we do not assume anything about the full subquivers (for the strata of the partition of the vertices) of the valued quivers at each step of the mutation. This necessary property for the verification of the other part of the definition of component preserving mutation sequence in \cite{BMRYZ} is again only obtained in \thref{ind-Tsyst}. 

(3) The application of \cite[Theorem 3.11 and Corollary 3.12]{BMRYZ} seems difficult because of the required knowledge about arrows between the mutation vertex and frozen vertices, which is essentially performing a sequence of mutations on a framed quiver and checking the conditions for a maximal green sequence. At the same time, Theorem \ref{thmB} is easy to apply since it deals with the original quiver $Q$ and not its framed quiver $\widehat{Q}$. For instance, Keller's conjecture \cite{K3} that the mutation sequences of Gei\ss--Leclerc--Schr\"oer for certain seeds of the cluster algebra structure on the coordinate ring of every unipotent cell of a symmetric Kac--Moody algebra \cite{GLS1} can be proved by a short application of Theorem \ref{thmB}, see \exref{GLSseq}.

(4) In order to obtain Theorem \ref{thmA} from the results in \cite{GY0,GY-Memo,GY-memo2}, we have to perform a surgery on frozen variables of cluster algebras. The results of \cite{GY0,GY-Memo,GY-memo2} give information about cluster structures on CGL extensions that have very particular choices of frozen variables. Those results are proved by using noncommutative and Poisson algebra methods (noncommutative and Poisson UFDs). This information is converted to combinatorial information for input to Theorem \ref{thmA} by removing all frozen indices. Then principal coefficients are inserted by applying the combinatorial Theorem \ref{thmA}.
\medskip

Given a symmetric quantum CGL extension $R$, we can consider the localized quotients $(R/I)[E^{-1}]$ where $I$ is a torus invariant prime ideal and $E$ is the collection of nonzero homogeneous normal elements of the factor algebra. On the Poisson side, we can consider such quotients where $I$ is the vanishing ideal of a torus orbit of symplectic leaves of the spectrum of $R$ and $E$ is the collection of nonzero Poisson-normal elements. Clusters of such algebras can be built along the lines of collecting sequences of normal elements in the images of saturated chains of subalgebras of $R$ as was done for quantum Richardson varieties for symmetrizable Kac--Moody groups in \cite{LY}. We conjecture that the process of reordering of the generators and recomputing normal elements as in Theorem \ref{thmA} is a reddening sequence for these even more general situations than those in Theorem A. 

\subsection*{Notation} For positive integers $j \leq k$, set $[j,k]:= \{j, j+1, \ldots, k \}$. For a finite index set $I$ and a ring $A$, denote by $M_{I,I'}(A)$ 
the abelian group of matrices with entries in $A$ and rows and columns indexed by $I$ and $I'$, respectively. When $I=I'=[1,N]$, we will denote the group by $M_{N,N}(A)$ as usual. 

The one-line notation for the elements of the symmetric group $S_N$ will be written in square brackets to avoid confusion with sequences of mutations. So, 
\[
\sig = [k_1 \, k_2 \, \ldots \, k_N]
\]
is the permutation $\sig \in S_N$ such that $\sig(i) = k_i$ for all $1 \leq i \leq N$. Denote the simple transpositions 
\[
s_k := (k \, (k+1)) \quad \mbox{for}
\quad 1 \leq k \leq N-1.
\]
For a reduced expression $\boldsymbol{w} = s_{k_1} \ldots s_{k_l}$ of a permutation, denote the initial subwords
\[
\boldsymbol{w}_{\leq i} := s_{k_1} \ldots s_{k_i} \quad \mbox{for} \quad 0 \leq i \leq l.
\]
\subsection*{Acknowledgments} The research of the author was supported by the Bulgarian Science Fund grant KP-06-N 62/5 and the Ministry of Education and Science grant DO1-239/10.12.2024, the Simons Foundation grant SFI-MPS-T-Institutes-00007697 and the USA National Science Foundation grant DMS-2200762.
%%%%%%%%%%%
\sectionnew{Preliminaries on cluster algebras and maximal green sequences}
\lb{prelim-CA}
In this section we review background material on mutations of exchange matrices and valued ice quivers, as well as on maximal green sequences. 
\subsection{Combinatorial background on cluster algebras}
\label{2.1}
Cluster algebras of geometric type \cite{FZ1} and quantum cluster algebras \cite{BerZe} are defined starting from a rectangular integer matrix with skew-symmetrizable principal part. Fix a finite (index) set $I$ and two complementary (disjoint) subsets 
\[
I = \ex \sqcup \fr.
\]
The indices in $\ex$ will be called {\em{exchangeable}} (or {\em{mutable}}) and those in $\fr$
{\em{frozen}}. For a matrix 
\[
\B = (b_{ij}) \in M_{I, \ex}(\Zset),
\]
its $\ex \times \ex$ submatrix will be denoted by $B$ and will be called the principal part of $\B$. An {\em{exchange matrix}} $\B \in M_{I, \ex}(\Zset)$ is a matrix whose principal part $B$ is skew-symmetrizable; 
that is, there is a diagonal matrix $D=\diag(d_j,~j\in\ex)$ with diagonal entries $d_j \in \Zset_+$ such that $(DB)^T= DB$. The entries $d_j$ will be assumed to be relatively prime. 

The cluster algebra of geometric type \cite{FZ1} and the quantum cluster algebra \cite{BerZe}, associated to an exchange matrix $\B$ are certain commutative and noncommutative algebras, respectively, defined in an iterative way. In the quantum case, one also needs to assume the existence of a skew-symmetric $I \times I$ matrix compatible in certain way with $\B$, \cite{BerZe}. 
Cluster algebras are defined via consecutive mutations on seeds consisting of an exchange matrix and cluster variables \cite{FZ1}, while quantum cluster algebras are defined via consecutive mutations on quantum seeds consisting of an exchange matrix and a toric frame \cite{BerZe}.
The iterative combinatorial construction of \cite{FZ1,BerZe} uses a mutation procedure on the levels of cluster variables and exchange matrices. The {\em{mutation}} of the matrix $\B$ in the direction of $k \in \ex$ is defined by 
\[
\mu_k(\B) = (b_{ij}'):=
\begin{cases}
-b_{ij} & \text{if } i = k \text{ or } j=k \\
b_{ij} + \frac{|b_{ik}|b_{kj} + b_{ik}|b_{kj}|}{2} & \text{otherwise.}
\end{cases}
\]
Sometimes we will display the mutation process as $\B \stackrel{k}{\longrightarrow} \B'$. 

If the principal part of an exchange matrix $\B$ is skew-symmetric, one can encode it by an {\em{ice quiver}} on $N$ vertices, which is a directed graph $Q(\B)$ such that   
\begin{enumerate}
\item $Q(\B)$ has no loops and 2-cycles and  
\item $Q(\B)$ has no arrows between its frozen vertices (labeled by $\fr$). 
\end{enumerate}
The quiver $Q(\B)$ is defined by requiring that the number of arrows from vertex $i$ to vertex $j$ equal $b_{ij}$ 
if $b_{ij}>0$ and $-b_{ij}$ otherwise. All ice quivers arise in this way. 

The mutation of matrices is translated to a mutation process on ice quivers. The mutation of $Q(\B)$ at the vertex $k$ (which coincides with $Q(\mu_k(\B))$ is performed in 3 steps:
\begin{enumerate}
\item reverse all arrows to and from the vertex $k$; 
\item complete the 2-paths through the vertex $k$ for which at least one end vertex is not frozen to triangles (3 cycles); 
\item cancel out pairs of opposite arrows.    
\end{enumerate}
For example, the mutation at the vertex 3 of the following quiver without frozen vertices
\[
\xymatrix{ & 3 \ar[ld] \\
1 \ar@<1ex>[rr] \ar@<0ex>[rr] & & 2 \ar[ul]
}
\quad  
\begin{matrix}
\\ \\
\mbox{is the quiver}
\end{matrix} \quad
\xymatrix{ & 3  \ar[dr] \\
1 \ar[ur] \ar@<0ex>[rr] & & 2
}
\]

General exchange matrices are encoded by {\em{valued ice quivers}} \cite[Sect. 3.3]{K2}, which are quivers whose vertices are indexed by $I$, satisfying the following conditions:
\begin{enumerate}
\item between each two vertices there is at most one arrow and there are no arrows between two frozen vertices (labeled by
$\fr$);
\item each arrow $i \to j$ is decorated with a pair of positive integers $(v_{ij}, v_{ji})$ for which there exist 
positive, relatively prime integers $d_i \in \Zset_+$ for $i \in \ex$ such that $d_i v_{ij} = d_j v_{ji}$ for all $i, j \in I$ connected by an arrow. Furthermore, if $i \in \fr$ or $j \in \fr$, then $v_{ij} = v_{ji}$. 
\end{enumerate}
If $v_{ij}= v_{ji}=1$, then the corresponding label is omitted. An ordinary ice quiver is transformed to a valued ice quiver by converting each $b$ arrows from $i$ to $j$ to $i \stackrel{(b,-b)}{\longrightarrow} j$. 

Let $\B$ be an exchange matrix. Set $b_{ji} := - b_{ij}$ for $i \in \fr$ and $j \in \ex$. The valued ice quiver $Q(\B)$ corresponding to $\B$ is the quiver for which there is an arrow $i \to j$ if $b_{ij}>0$ and its label is $(b_{ij}, -b_{ji})$. The collection of positive integers $\{d_i \mid i \in \ex\}$ for $Q(\B)$ is the same as the one for $\B$. The mutation of exchange matrices corresponds to a mutation of valued ice quivers, see \cite[Sect. 3.3]{K2}. 

By a valued quiver we mean a valued ice quiver without frozen vertices.
\subsection{Maximal green sequences}
Denote $\ex':=\{i' \mid i \in \ex\}$.
Let $B \in M_{\ex \times \ex}(\Zset)$
be the exchange matrix of a cluster algebra without frozen variables or the principal part of an exchange matrix $\B \in M_{I \times \ex}(\Zset)$. 
The corresponding cluster algebra with principal coefficients is associated to the exchange matrix 
\[
\wh{B}:= 
\begin{bmatrix}
B \\
\Id  
\end{bmatrix} \in M_{\ex \sqcup \ex', \ex}(\Zset)
\]
for the identity matrix $\Id := (c_{ij'})_{i,j \in \ex}$ where $c_{ij} = \delta_{ij}$.
 The corresponding valued ice quiver $Q(\wh{B})$ is the framed quiver of the quiver $Q(B)$, which is the quiver $\wh{Q(B)}$ obtained from the quiver $Q(B)$ by adding the vertices $\ex'$
and an edge $i \to e'$ for all $i \in \ex$. For example, the framed quiver of the $A_3$ type quiver $1 \leftarrow 2 \leftarrow 3$ is the quiver 
\begin{equation}
\label{frameA3}
\xymatrix{
1 \ar[d] & 2 \ar[l] \ar[d]
& 3 \ar[l] \ar[d] 
\\
1' & 2' & 3'
}
\end{equation}

Consider a sequence of mutations
\[
B' := \mu_{k_i} \ldots \mu_{k_1} (B).
\]
The $c$-vector of the index $j \in \ex$ is the second half of the $j$-th column of the matrix
\[
\mu_{k_i} \ldots \mu_{k_1} (\widehat{B})
\]
considered as a vector in $\Zset^{\ex'} \cong \Zset^\ex$. It depends on the sequence of mutations, not on the matrix $B'$ alone. Fomin and Zelevinsky \cite{FZ4} conjectured that the coordinates of each $c$-vector are either all non-negative or all non-positive (sign coherence conjecture for $c$-vectors).
%, in which case the vertex is called green or red, respectively. 
The conjecture was proved in the skewsymmetric case in \cite{DWZ,N,P} and, in general, by Gross, Hacking, Keel and Kontsevich \cite{GHKK}. 
\bde{green} (Keller, \cite{K1})
\hfill 
\begin{enumerate}
\item[(a)] The vertex $i \in \ex$ is a {\em{green}} (respectively, {\em{red}}) vertex of $\mu_{k_i} \ldots \mu_{k_1} (B)$ if all coordinates of its $c$-vector are non-negative (respectively, non-positive). 
\item[(b)] A sequence of mutations $k_1, \ldots, k_n$ is a {\em{green sequence}} for the 
exchange matrix $B$ if for all $1 \leq i \leq n$, $k_i$ is a green vertex of $\mu_{k_{i-1}} \ldots \mu_{k_1} (B)$. 
\item[(c)] A sequence of mutations $k_1, \ldots, k_n$ is a {\em{maximal green sequence}} for the exchange matrix $B$ if it is a green sequence and all vertices $i \in \ex$ are red for $\mu_{k_n} \ldots \mu_{k_1} (B)$. 
\item[(d)] A sequence of mutations $k_1, \ldots, k_n$ is a {\em{reddening sequence}} (or a {\em{green-to-red sequence}}) for the exchange matrix $B$ if all vertices $i \in \ex$ are red for $\mu_{k_n} \ldots \mu_{k_1} (B)$. 
\end{enumerate}
\ede
%%%%%%%%%%%%%%%%%%%%%%%%%%%%%%%
\sectionnew{Quantum and Poisson CGL extensions}
\lb{prelim-CGL}
In this section we review the basics of quantum and Poisson CGL extensions, the structure of prime elements in chains of subalgebras, 
and the construction of quantum and classical cluster algebra structures on them.  
\subsection{Quantum CGL extensions} Let $\KK$ be an arbitrary base field, $A$ a $\KK$-algebra, $\sigma$ an algebra automorphism of $A$ and $\delta$ a 
$\sigma$-derivation of $A$ (i.e., $\delta(ab) = \delta(a) b + \sigma(a) \delta(b)$, $\forall a, b \in A$). 
A skew polynomial extension of a $\KK$-algebra $A$ is an algebra of the form 
\[
A[x; \sig, \de] := \oplus_{n\geq 0 } A x^n,
\]
where $A$ is a subalgebra and
\[
x a = \sig(a) x + \de(a) \quad {\mbox{for all}} \; \;  a \in A.
\]
Consider an iterated Ore extension (also called iterated skew polynomial algebra)
\begin{equation}
    \label{R}
R := \KK[x_1][x_2; \sigma_2, \delta_2] \cdots [x_N; \sigma_N, \delta_N], 
\end{equation}
and denote the intermediate algebras 
\[
R_k:=\KK[x_1][x_2; \sigma_2, \delta_2] \cdots [x_k; \sigma_k, \delta_k]
\]
for
$0 \leq k \leq N$. In particular, $R_0 = \KK$ and $R_N = R$. 

\bde{CGL} \hfill 
\begin{enumerate}
\item[(a)] An iterated Ore extension $R$ 
is called a (quantum) \emph{Cauchon--Goodearl--Letzter} (\emph{CGL}) \emph{extension} \cite{Ca,GL} if it is equipped with a rational action of a $\KK$-torus $T$ 
by $\KK$-algebra automorphisms satisfying the following conditions:
\begin{enumerate}
\item[(i)] The elements $x_1, \ldots, x_N$ are $T$-eigenvectors.
\item[(ii)] For every $2 \leq k \leq N$, $\delta_k$ is a locally nilpotent 
$\sigma_k$-derivation of $R_{k-1}$. 
\item[(iii)] For every $1 \leq k \leq N$, there exists $h_k \in T$ such that 
$\sigma_k = (h_k \cdot)$ and the $h_k$-eigenvalue of $x_k$, to be denoted by $\la_k$,  is not a root of unity. 
\end{enumerate}
\item[(b)] A CGL extension is called  {\em{symmetric}} if it is also a CGL extension when the 
generators $x_1, \ldots, x_N$ are adjoined in the reverse order
$$
R= \KK [x_N] [x_{N-1}; \sigma^*_{N-1}, \delta^*_{N-1}] \ldots [x_1; \sigma^*_1, \delta^*_1].
$$
The corresponding lambda scalars will be denoted by $\lambda_k^*$.
\end{enumerate}
\ede

This axiomatics comes from the works of Cauchon \cite{Ca} and Goodearl--Letzter \cite{GL}
who initiated the study of the ring theoretic properties of the algebras in this class.

For $1 \leq j < k \leq N$, let $\la_{kj} \in \KK^\times$ denote the $h_k$-eigenvalue of $x_j$, i.e., $h_k \cdot x_j = \la_{kj} x_j$.
The symmetricity of a CGL extension is equivalent to imposing a mild condition on the action of $T$ and the following abstract Levendorskii--Soibelman type straightening law (see \cite[Definition 3.12]{GY-Memo}):
\medskip

{\em{For all $j <k$, the element $x_k x_j - \la_{kj} x_j x_k$
belongs to the unital subalgebra of $R$ generated by $x_{j+1}, \ldots, x_{k-1}$ for some 
scalar $\la_{kj} \in \KK^\times$.}}

\bre{symmetric} The term symmetric in \deref{CGL}(2) does not refer to algebras that come from symmetric Kac--Moody algebras or quantum cluster algebras associated to quivers rather than valued quivers; all of those naturally appear in this setting. The term is referring to the condition that $R$ is a CGL extension in both the direct and reverse orders of adjoining its generators. 
\ere

If an algebra $A$ is equipped with a rational action of a $\KK$-torus $T$ by algebra automorphisms, then $A$ has a canonical grading by the rational character lattice $X(T)$ of $T$. An element $a \in A$ will be called homogeneous if it is homogeneous with respect to this grading.
Its weight will be denoted by
\begin{equation}
\label{wt}
\deg (a) \in X(T).
\end{equation}

Following Chatters \cite{Cha}, a nonzero element $p$ of a domain $A$ (a noncommutative ring without zero divisors) is called {\em{prime}} if it is {\em{normal}} (meaning that $Ap=pA$)
and the factor $A/pA$ is a domain. A noetherian domain $A$ is called a {\em{unique factorization domain}} (UFD) \cite{Cha} if every nonzero prime ideal of $A$ contains a 
prime element. It is easy to show that, in the commutative noetherian situation, this definition 
is equivalent to the more common one. If $A$ is a $\KK$-algebra equipped with a rational
action of a $\KK$-torus $T$ by algebra automorphisms, then $A$ is called a $T$-UFD \cite{LLR} if every nonzero $T$-prime ideal of $A$ contains a homogeneous prime element. Recall that a $T$-prime ideal of a (noncommutative) algebra $A$ is a $T$-stable ideal such that
\begin{equation}
IJ \subseteq P \Rightarrow I \subseteq P 
\; \; \mbox{or} \; \; J \subseteq P
\label{IJ}
\end{equation}
for all $T$-stable ideals $I$ and $J$ of $A$. 
%Every homogeneous normal elements of such an algebra has a unique up to associates factorzation into homogeneous prime elements. 
Launois, Lenagan and Rigal proved \cite{LLR} that every CGL extension is a $T$-UFD. 

For a function $\eta : [1,N] \to \Zset$ consider the {\em{predecessor}} and {\em{successor functions}} 
\[
[1,N] \to [1,N] \sqcup \{-\infty\} 
\quad 
\mbox{and} 
\quad
[1,N] \to [1,N] \sqcup \{\infty\}
\]
for its level sets defined by 
\[
k[-1] := 
\begin{cases}
\max \{ j <k \mid \eta(j) = \eta(k) \}, 
&\mbox{if $\exists j < k$ such that $\eta(j) = \eta(k)$} 
\\
- \infty, \; & \mbox{otherwise} 
\end{cases}
\]
and
\[
k[1]:=
\begin{cases}
\min \{ j > k \mid \eta(j) = \eta(k) \}, 
&\mbox{if $\exists j > k$ such that $\eta(j) = \eta(k)$} 
\\
\infty, & \mbox{otherwise}. 
\end{cases}
\]
Set $k[0]:=k$ and $k[\pm (m+1)] := k[\pm m][\pm 1]$ whenever the latter is defined. Define the {\em{order function}}
\begin{equation}
\label{order}
O_\pm : [1,N] \to \Nset, 
\quad 
O_\pm(k) := \max \{ m \in \Nset | k[\pm m ] \neq \pm \infty\}. 
\end{equation}

\bth{CGLprep}
{\rm \cite[Theorem 4.3]{GYprep}} The following hold for every CGL extension $R$ as in \eqref{R}:
\begin{enumerate}
\item[(a)] For each $1 \leq k \leq N$, the subalgebra $R_k$ has a unique up to scalar homogeneous prime element $p_k$ that does not belong to $R_{k-1}$. 
\item[(b)] There exist a function $\eta : [1,N] \to \Zset$ and elements
$$
c_k \in R_{k-1} \; \; \mbox{for all} \; \; k \in [2,N] \; \; 
\mbox{with} \; \; \delta_k \neq 0
$$
such that the homogeneous prime elements $p_1, \ldots, p_N$ (up to rescaling) are recursively given by
\[
p_k := 
\begin{cases}
p_{k[-1]} x_k - c_k, &\mbox{if} \; \;  \delta_k \neq 0 \\
x_k, & \mbox{if} \; \; \delta_k = 0. 
\end{cases}
\]
\item[(c)]
The function $\eta$ with these properties is not unique but its level sets, and as a consequence, the successor and predecessor functions are unique. Furthermore, $k[-1] = - \infty$ if and only if $\delta_k=0$.
\end{enumerate}
\eth
We note that the elements $p_1, \ldots, p_N$ are not prime elements of $R$, but all homogeneous prime elements of $R$ up to rescaling are 
\[
\{ p_k \mid k[1]= \infty\},
\]
and they are the frozen variables of a quantum cluster algebra structure on $R$ constructed in \cite{GY0,GY-Memo} and recalled below.

We will need the following two conditions that are satisfied for all interesting symmetric CGL extensions that we are aware of:
\medskip

{\bf{Condition (A).}} The base field $\KK$ contains square roots $\nu_{kj}$ of the scalars
$\la_{kj}$ for $1 \leq j < k \leq N$ such that the subgroup of $\KK^\times$ generated 
by all of them contains no elements of order $2$.
\medskip

{\bf{Condition (B).}} There exist positive integers $d_n$ for $n \in \range(\eta)$,
such that 
$$
\la_k^{d_{\eta(j)}} = \la_j^{d_{\eta(k)}} 
$$
for all $k, j \in [1,N]$, $p(k) \neq - \infty$, $p(j) \neq - \infty$.
\medskip

Denote the matrix 
\begin{equation}
\label{nu}
\boldsymbol{\nu} := (\nu_{kj})_{k,j=1}^N \in M_{N,N}(\KK^\times), 
\end{equation}
where $\nu_{kj}$ are given from Condition (A) for $j <k$, $\nu_{kk}=1$ and $\nu_{kj}:= \nu_{jk}^{-1}$ for $j> k$. 

To each permutation $\tau \in S_N$, we associate another permutation 
\begin{equation}
\label{taub}
\tau_\bullet \in \prod_{a \in \Zset} S_{\eta^{-1}(a)}
\end{equation}
defined as follows.
For $a$ in the range of $\eta$, set $|a|:= |\eta^{-1}(a)|$. Consider the 
set
\[
\eta^{-1}(a)= \{ \tau(k_1), \ldots, \tau(k_{|a|}) \mid k_1 < \cdots  < k_{|a|} \}
\]
and order its elements in an increasing order. Define $\tau_\bullet \in S_N$ by setting
$\tau_\bullet(\tau(k_i))$ to be equal to the $i$-th element in the list 
(for all choices of $a$ and $i$). 

For example, consider the case when $N=6$ and $\eta : [1,6] \to \Zset$ is given by 
$$
\eta(1)=\eta(4) = \eta(6)=1, \quad
\eta(2)=\eta(5)= 2, \quad
\eta(3) =3.
$$
Let 
$$
\tau := [3 \, 4 \, 2 \, 5 \, 1\, 6] \, \in S_6
$$ 
in the one-line notation for permutations. Then
$$
\eta^{-1}(1) = \{ \tau(5)=1 < \tau(2) = 4 < \tau(6) = 6 \}
$$
and thus, 
$$
\tau_\bu(4) =1, \quad \tau_\bu(1) = 4, \quad \tau_\bu(6) =6.
$$
Similarly, one finds that $\tau_\bu(j)=j$ for $j =2$, $3$ and $5$.

Denote the following subset of the symmetric group $S_N$:
\begin{multline}
\label{tau}
\Xi_N := \{ \tau \in S_N \mid 
\tau(k) = \max \, \tau( [1,k-1]) +1 \; \;
\mbox{or} 
\\
\tau(k) = \min \, \tau( [1,k-1]) - 1, 
\; \; \forall k \in [2,N] \}.
\end{multline}
Equivalently, $\Xi_N$ can be defined as 
\[
\Xi_N = \{ \tau \in S_N \mid \tau([1,k]) \; 
\mbox{is an interval for all} \; 2 \leq k \leq N \}.
\]
The identity element of $S_N$ will be denoted by $\id$. It belongs to $\Xi_N$.

Set
\begin{equation}
\label{frex}
\fr := \{ 1 \leq k \leq N \mid k[1] = + \infty \}
\quad
\mbox{and}
\quad
\ex := [1,N] \setminus \fr.
\end{equation}

For the square matrix ${\boldsymbol{\nu}}$ from Condition (A), see \eqref{nu},
denote by $\Om : \Zset^N \times \Zset^N \to \KK^\times$ the multiplicative bicharacter 
such that
\[
\Omega(e_l, e_j)=
\prod_{m=0}^{O_-(l)} \prod_{n=0}^{O_-(j)}
\mu_{l[-m], j[-n]}, \quad \forall 1 \leq l, j \leq N,
\]
where $e_1, \ldots, e_N$ are the standard generators of $\Zset^N$. 

\bth{cluster} \cite[Theorem 8.2 and Proposition 8.13]{GY-Memo} Let $R$ be a symmetric CGL extension satisfying conditions (A) and (B). Then $R$ is isomorphic to a quantum cluster algebra with frozen and exchangeable indices $\fr$ and $\ex$ given by \eqref{frex} for the function $\eta : [1,N] \to \Zset$ from \thref{CGLprep}
that has a subset of (explicitly constructed) seeds $\Sigma_\tau$ for $\tau \in \Xi_N$ such that the following hold:
\begin{enumerate}
\item[(a)] For every $l \in \ex$, there exists a unique vector $b^j=(b^l_i) \in \Zset^N$ such that 
\[
\sum_{i=1}^N b^l_i \deg(x_i)=0,
\]
recall \eqref{wt}, and 
\[
\Omega(b^l, e_j)=1, \quad \forall 1 \leq j \leq N, j \neq l, \quad \quad
\Omega(b^l, e_l)^2 = (\lambda_l^*)^{-1}.
\]
The exchange matrix of the initial seed $\Sigma:= \Sigma_\id$ is the matrix $\B$ with columns $b^j$ for $j \in \ex$. The cluster variables of $\Sigma$ are rescalings of the elements $p_1, \ldots, p_N$ from \thref{CGLprep}.
\item[(b)] For all $\tau, \tau' \in \Xi_N$ satisfying
$$
\tau' = ( \tau(k) \, \tau(k+1)) \tau = \tau (k \, (k+1))
$$
for some $k \in [1,N-1]$, the following hold:
\begin{enumerate}
\item[(i)]
If $\eta(\tau(k)) \neq \eta (\tau(k+1))$, then $\Sigma_{\tau'} = \Sigma_\tau$.
\item[(ii)] If $\eta(\tau(k)) = \eta (\tau(k+1))$, 
then $\Sigma_{\tau'} = \mu_{k_\bullet}(\Sigma_\tau)$, where $k_\bullet = \tau_\bullet \tau(k)$.
\end{enumerate}
\item[(c)] In the case (ii) in part (b), the only negative entries of the $k_\bullet$-th row of the exchange matrix of the seed $\Sigma_\tau$ are in rows $k_\bullet[-1]$ and $k_\bullet[1]$ and the positive entries are in rows $j$ such that $\eta(j) \neq \eta(k)$.
\end{enumerate}
The frozen variables of the quantum cluster algebra in question are not inverted.
\eth 

The quantum cluster algebras in the theorem are the multiparameter ones as developed in \cite[Chapter 2]{GY-Memo}. The CGL extensions that are quantum cluster algebras in the original sense of Berenstein and Zelevinsky \cite{BerZe} are the ones for which the scalars $\la_k$, $\la_{kj}$ are powers of an indeterminate $q$. 

Parts (a) and (b) of \thref{cluster} are from Theorem 8.2 in \cite{GY-Memo}. Part (c) of \thref{cluster} is the combination of Theorem 8.2(a) and Proposition 8.13(b) in \cite{GY-Memo}. 

\bre{negative} In Theorem \ref{tcluster} we consider the negative of the exchange matrices used in \cite{GY-Memo}. Of course, this does not change the cluster structure. We chose the sign of exchange matrices in order to get maximal green sequences from $\B$ to $\B_{w_\circ}$ instead of the other way around, where $w_\ci$ is the longest element of $S_N$. More concretely, the choice of sign ensures that at each step the mutation quiver has incoming arrows to the mutation vertex $k$ from $k[-1]$ and $k[1]$ instead of outgoing arrows. 
\ere

The role of $T$-UFDs is that in such algebras every homogeneous normal element is uniquely represented as a product of homogeneous prime elements (up to associates)
which is used to force the mutations in \cite[Theorem 8.2]{GY-Memo}; the entries of the exchange matrices record the exponents of the primes for certain 
homogeneous normal elements.  
\subsection{Poisson CGL extensions}
Let $A$ be a Poisson algebra over a base field $\KK$. A Poisson Ore extension is a Poisson algebra $A[x; \sig, \de]_p$, containing $A$ as a Poisson subalgebra, which is isomorphic to $A[x]$ as commutative algebra and whose Poisson bracket satisfies
\[
\{x, a\} = \sig(a) x + \de(a), \; \; \forall a \in A.
\]
Here $\sigma$ is a Poisson derivation of $A$ and $\delta$ is a skew derivation, meaning that it is a derivation of the commutative algebra B and
\[
\de ( \{a, b \}) = \{\de(a), b\} + \{a, \de(b)\} + \sig(a) \de(b) - \de(a) \sig(b), \quad \forall a,b \in A.
\]
For an iterated  Poisson-Ore extension
\begin{equation}
\label{PR}
R := \KK[x_1]_p [x_2; \sig_2, \delta_2]_p \cdots [x_N; \sig_N, \delta_N]_p
\end{equation}
and $0 \leq k \leq N$, denote
$$
R_k := \KK [ x_1, \dots, x_k ] = \KK[x_1]_p [x_2; \sig_2, \delta_2]_p \cdots [x_k; \sig_k, \delta_k]_p.
$$

\bde{PCGL}
\hfill 
\begin{enumerate}
\item[(a)]
An iterated Poisson-Ore extension $R$ as above is called a {\em{Poisson CGL extension}} 
 if it is equipped with a rational Poisson action of a torus $T$ such that
\begin{enumerate}
\item[(i)] The elements $x_1, \ldots, x_N$ are $T$-eigenvectors.
\item[(ii)] For every $k \in [2,N]$, the skew derivation $\de_k$ of $R_{k-1}$ is locally nilpotent. 
\item[(iii)] For every $k \in [1,N]$, there exists $h_k \in \Lie T$ such that 
$\sig_k = (h_k \cdot)|_{R_{k-1}}$ and the $h_k$-eigenvalue of $x_k$, to be denoted by $\la_k$, is nonzero.
\end{enumerate}
\item[(b)] A Poisson CGL extension is called  {\em{symmetric}} if it is also a Poisson CGL extension when the 
generators $x_1, \ldots, x_N$ are adjoined in the reverse order
$$
R= \KK [x_N] [x_{N-1}; \sigma^*_{N-1}, \delta^*_{N-1}]_p \ldots [x_1; \sigma^*_1, \delta^*_1]_p.
$$
The corresponding $\lambda$-scalars will be denoted by $\lambda_k^*$.
\end{enumerate}
\ede

The symmetricity property is equivalent to requiring 
\[
\delta_k(x_j) \in \KK[x_{j+1}, \ldots, x_{k-1}], \quad \forall 1 \leq j < k \leq N
\]
and a mild condition on the $T$-action, see
\cite[Definition 6.1]{GY-memo2}. 

For a CGL extension $R$, denote the $h_k$-eigenvalues $\lambda_{kj} \in \KK$ given by $h_k . x_j = \lambda_{kj} x_j$ for $1 \leq  j < k \leq N$. 
Set $\lambda_{kk}=0$ and $\lambda_{jk} = - \lambda_{kj}$ for $1 \leq k < j \leq N$. Denote the skew symmetric matrix
\begin{equation}
\label{la}
\boldsymbol{\la} := (\lambda_{kj})_{k,j=1}^N \in M_{N,N}(\KK).
\end{equation}  

A Poisson domain is a Poisson algebra $R$ which is an integral domain as a commutative algebra. A Poisson-prime element of a Poisson domain $R$ is a prime element $p \in R$ such that
\[
p \mid \{ p, r \}, \quad \forall r \in R. 
\]
These conditions are equivalent to saying that the ideal $(p)$ is a prime ideal and a Poisson ideal.
In the case when the base field $\KK$ is uncountable and algebraically closed, $p \in R$ is Poisson-prime if and only if $p$ is a prime element of $R$ and the zero locus $V(p) \subseteq \MaxSpec R$ is a union of symplectic cores in the sense of \cite{BG}. In the case when $\KK = \Cset$, one can replace ``symplectic cores'' with ``symplectic leaves'' in this statement, see \cite[Sect. 4.1]{GY-memo2}.

A Poisson-prime ideal of a Poisson algebra $R$ is a Poisson ideal $P$ such that \eqref{IJ} holds for all Poisson ideals $I$ and $J$ of $R$. A Poisson domain $R$ is a Poisson unique factorization domain (Poisson-UFD), if each nonzero Poisson-prime ideal of $R$ contains a Poisson-prime element. 

In the equivariant version of this concept, one considers a Poisson domain $R$ equipped with a Poisson action of a torus $T$. A $T$-Poisson-prime ideal of a Poisson algebra $R$ is a $T$-stable Poisson ideal $P$ such that \eqref{IJ} holds for all $T$-stable Poisson ideal $I$ and $J$ of $R$. A Poisson domain $R$ is a $T$-Poisson unique factorization domain (Poisson-UFD), if each nonzero $T$-Poisson-prime ideal of $R$ contains a homogeneous (with respect to the $X(T)$-grading) Poisson-prime element. All Poisson CGL extensions are $T$-Poisson UFDs \cite[Theorem 4.7]{GY-memo2}.

\bth{CGLprepP}
{\rm \cite[Theorem 5.5]{GY-memo2}} The following hold for every Poisson CGL extension $R$ of length $N$:
\begin{enumerate}
\item[(a)] For each $1 \leq k \leq N$, the subalgebra $R_k$ has a unique up to scalar homogeneous Poisson-prime element $y_k$ that does not belong to $R_{k-1}$. 
\item[(b)] There exist a function $\eta : [1,N] \to \Zset$ and elements
$$
c_k \in R_{k-1} \; \; \mbox{for all} \; \; k \in [2,N] \; \; 
\mbox{with} \; \; \delta_k \neq 0
$$
such that the homogeneous Poisson-prime elements $p_1, \ldots, p_N$ (up to rescaling) are recursively given by 
\[
p_k := 
\begin{cases}
p_{k[-1]} x_k - c_k, &\mbox{if} \; \;  \delta_k \neq 0 \\
x_k, & \mbox{if} \; \; \delta_k = 0. 
\end{cases}
\]
\item[(c)]
The function $\eta$ with these properties is not unique but its level sets, and as a consequence, the successor and predecessor functions are unique. Furthermore, $k[-1] = - \infty$ if and only if $\delta_k=0$.
\end{enumerate}
\eth

For the square matrix ${\boldsymbol{\la}}$ from \eqref{la},
denote by $\Theta : \Zset^N \times \Zset^N \to \KK^\times$ the additive bicharacter defined by 
\[
\Theta(e_l, e_j)=
\sum_{m=0}^{O_-(l)} \sum_{n=0}^{O_-(j)}
\mu_{l[-m], j[-n]}, \quad \forall 1 \leq l, j \leq N,
\]
recall \eqref{order}. 

\bth{cluster-P} \cite[Theorem 11.1 and Proposition 11.8]{GY-memo2} Let $R$ be a symmetric Poisson CGL extension such that
\begin{equation}
\label{int}
\lambda_k/\lambda_j \in \Qset_+, \quad \forall 1 \leq j \neq k \leq N.
\end{equation}
Then $R$ is isomorphic to a classical cluster algebra with frozen and exchangeable indices $\fr$ and $\ex$ given by \eqref{frex} for the function $\eta : [1,N] \to \Zset$ from \thref{CGLprepP}
that has a subset of (explicitly constructed) seeds $\Sigma_\tau$ for $\tau \in \Xi_N$ such that the following hold:
\begin{enumerate}
\item[(a)] For every $l \in \ex$, there exists a unique element $b^j=(b^l_i) \in \Zset^N$ such that 
\[
\sum_{i=1}^N b^l_i \deg(x_i)=0,
\]
recall \eqref{wt}, and 
\[
\Theta(b^l, e_j) = - \delta_{j,l} \lambda_l^*, \quad \forall 1 \leq j  \leq N.
\]
The exchange matrix of the initial seed $\Sigma:= \Sigma_\id$ is the matrix $\B$ with columns $b^j$ for $j \in \ex$. The cluster variables of $\Sigma$ are rescalings of the elements $p_1, \ldots, p_N$ from \thref{CGLprepP}.
\item[(b)] For all $\tau, \tau' \in \Xi_N$ satisfying
$$
\tau' = ( \tau(k) \, \tau(k+1)) \tau = \tau (k \,  (k+1))
$$
for some $k \in [1,N-1]$, the following hold:
\begin{enumerate}
\item[(i)]
If $\eta(\tau(k)) \neq \eta (\tau(k+1))$, then $\Sigma_{\tau'} = \Sigma_\tau$.
\item[(ii)] If $\eta(\tau(k)) = \eta (\tau(k+1))$, 
then $\Sigma_{\tau'} = \mu_{k_\bullet}(\Sigma_\tau)$, where $k_\bullet = \tau_\bullet \tau(k)$.
\end{enumerate}
\item[(c)] In the case (ii) in part (b), the only negative entries of the $k_\bullet$-th row of the exchange matrix of the seed $\Sigma_\tau$ are in rows $k_\bullet[-1]$ and $k_\bullet[1]$ and the positive entries are in rows $j$ such that $\eta(j) \neq \eta(k)$.
\end{enumerate}
The frozen variables of the quantum cluster algebra in question are not inverted.
\eth 
Parts (a) and (b) of \thref{cluster-P} are from Theorem 11.1 in \cite{GY-memo2} and part (c) of the theorem follows from Theorem 11.1(a) and Proposition 11.8(b) in \cite{GY-Memo}. For the reasons described in \reref{negative}, 
in \thref{cluster-P} we are using the negative of the exchange matrices used in \cite[Theorem 11.1]{GY-Memo}, which does not change the cluster structure. \bre{quant} In \cite{Mi-th}, Mi constructed a quantization of each symmetric Poisson CGL extension that is integral in the sense that the elements $h_k$
in \deref{PCGL} belong to the cocharacter lattice of $T$ and proved that the quantization is a symmetric (quantum) CGL
extension. This integrality assumption implies the weaker integrality condition \eqref{int} used in \thref{cluster-P}. Under the former assumption, one can show that the sets of exchange matrices and the 
mutations between them in Theorems \ref{tcluster} and \ref{tcluster-P} match with each other. In this case, our main theorems on maximal green sequences 
for quantum and Poisson CGL extensions can be deduced from each other. However, it is more natural to consider the proofs in parallel as instances of the notion of 
layered $T$-systems developed in the next section, instead of translating mutations of exchange matrices from one setting to the other. 
\ere

\sectionnew{Layered $T$-systems and maximal green sequences}
\label{Tsystres}
In this section we define a general notion of full layered $T$-systems and prove that they are maximal green sequences. 
\subsection{Layered $T$-systems}
Consider a sequence of mutations of exchange matrices without frozen variables
\begin{equation}
\label{mut-sequence}
B=B_1 \stackrel{k_1}{\longrightarrow} B_2 \stackrel{k_2}{\longrightarrow} \ldots \stackrel{k_r}{\longrightarrow} B_{r+1} \in M_{\ex, \ex}(\Zset)
\end{equation}
and assume that $\ex$ is totally ordered, i.e., 
\[
\ex \subseteq [1,N] 
\]
for some positive integer $N$.
\bde{Tsyst} The sequence of mutations \eqref{mut-sequence} will be called a {\em{layered $T$-system}} if there exists a function
\[
\eta : \ex \to \Zset
\]
satisfying the following conditions:
\begin{enumerate}
\item For all $1 \leq i \leq r$, the incoming arrows to the vertex $k_i$ of $Q(B_i)$ are simple arrows (without labels) 
from the vertices $k_i[-1]$ and $k_i[1]$. If one or both of these indices is $\pm \infty$, then there are no incoming arrows from it. 
\item The outgoing arrows from $k_i$ are only directed to the vertices $j \in \ex$ such that $\eta (j) \neq \eta(k_i)$. They are allowed 
to have arbitrary labels. 
\end{enumerate}
\ede 

A valued quiver whose set of vertices is indexed by a totally ordered set $\ex$, together with a function $\eta : \ex \to \Zset$, will 
be called a {\em{layered}} valued quiver. Quivers that appear in cluster algebras associated to Lie theory are layered 
since their cluster variables correspond to words in Weyl groups and the set of simple reflections (roots) provides the 
layering, see for instance \cite[Definitions 2.2 and 2.3]{BFZ05}.  

\bre{disp} Each layered valued quiver can be displayed on the plane by placing its vertices so that their $x$-coordinates appear in an increasing order and the $\eta$-function is given  
by the $y$-coordinate. The arrows of the quiver might intersect. 
%For a mutation sequence \eqref{mut-sequence} fix $1 \leq i \leq r$ and denote $k:= k_i$, $Q:=Q(B_i)$. 
\begin{figure}
\centering

\scalebox{0.7}{
\begin{tikzpicture}[
  every node/.style={font=\small},
  dot/.style={circle, draw, fill=white, inner sep=1.5pt},
  dashline/.style={blue, dashed, thick, dash pattern=on 6pt off 4pt},
  arr/.style={->, thick, shorten >=4pt, shorten <=4pt}
]

% Y levels
% t^(m) : y=4
% t^(1) : y=2.4
% t^(2) : y=1.6
% t     : y=0

% --- Dashed horizontal lines (80% longer: 6 -> 10.8) ---
\draw[dashline] (-0.5, 4)   -- (10.8, 4)   node[right] {$t^{(m)}$};
\draw[dashline] (-0.5, 2.4) -- (10.8, 2.4) node[right] {$t^{(1)}$};
\draw[dashline] (-0.5, 1.6) -- (10.8, 1.6) node[right] {$t^{(2)}$};
\draw[dashline] (-0.5, 0)   -- (10.8, 0)   node[right] {$t$};

% --- Points ---
\node[dot, label=above:$j^{(m)}$] (js)  at (9.8, 4)   {};
\node[dot, label=above:$j^{(1)}$] (j1)  at (1,   2.4) {};
\node[dot, label=above:$j^{(2)}$] (j2)  at (2.2, 1.6) {};

\node[dot, label={below:$k_i[-1]$}] (km1) at (2.0, 0) {};
\node[dot, label={below:$k_i$}]     (k)   at (2.8, 0) {};
\node[dot, label={below:$k_i[1]$}]  (kp1) at (3.6, 0) {};

% --- Arrows from kappa to j-vertices ---
\draw[arr] (k) -- (js) node[midway, right, xshift=2pt] {$(v_1^{(m)}, v_2^{(m)})$};
\draw[arr] (k) -- (j1) node[midway, left, xshift=(-8pt + 0.3cm)] {$(v_1^{(1)}, v_2^{(1)})$};
\draw[arr] (k) -- (j2) node[midway, right, xshift=(2pt - 0.22cm), yshift=8pt] {$(v_1^{(2)}, v_2^{(2)})$};

% --- Arrows from k[-1] and k[1] to k ---
\draw[arr] (km1) -- (k);
\draw[arr] (kp1) -- (k);

% --- Dots suggesting more lines ---
\node at (3.0, 3.0) {$\cdots$};

\end{tikzpicture}
}
\caption{Layered $T$-systems}
\label{fig:layerT}
\end{figure}
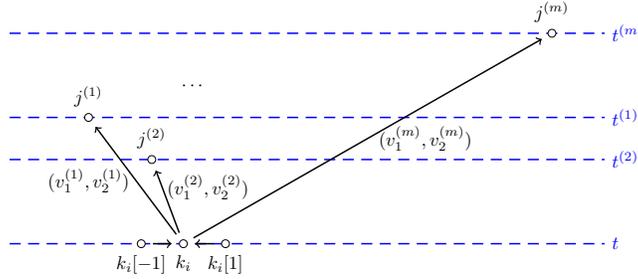
\deref{Tsyst} requires that for all $1 \leq i \leq r$, the arrows of the quiver $Q(B_i)$ incident to $k_i$ 
look like on Figure \ref{fig:layerT}: there are (at most two) simple incoming arrows from $k_i[-1]$ and/or $k_i[1]$ (for those indices that are finite) and those arrows are on the same row, and there are 
several outgoing (valued) arrows to vertices on other rows, labeled $j^{(1)}, \ldots, j^{(m)}$ on the figure.
On the right side of each row we display the value of $\eta$, i.e., the $y$-coordinate. The dotted lines represent the level sets of $\eta$; 
they are not a part of the quiver. 
\ere

For $t \in \range(\eta)$, denote 
\begin{equation}
\label{eta^-1}
\ell(t) := | \eta^{-1}(t) | \quad \mbox{and set} \quad  
\eta^{-1}(t) = \{ j_{t,1} < j_{t,2} < \ldots < j_{t, \ell(t)} \}. 
\end{equation}
\bde{Tsyst2} A layered $T$-system will be called {\em{full}} if for every $t \in \range(\eta)$, the mutations at the vertices of $\eta^{-1}(t)$ are shuffles of the 
$\ell(t)$ lists 
\begin{align*}
&j_{t,1}, j_{t,2}, \ldots, j_{t, \ell(t)-1}, j_{t, \ell(t)}, 
\\
&j_{t,1}, j_{t,2}, \ldots, j_{t, \ell(t)-1},
\\
&\ldots 
\\
&j_{t,1}, j_{t,2}, 
\\
&j_{t,1}.
\end{align*}
satisfying the condition that the index $j_{t,i}$ on the $n$-th row should come after the index $j_{t,i + 1}$ on the $(n-1)$-st row. 
\ede
\bex{1234} For example, if $\ex=[1,4]$ and $\eta$ is a constant function, then the sequence of mutations should have length 10 and the individual rows should be shuffled according to the diagram  
\begin{center}

\begin{tikzpicture}[
  every node/.style={font=\large},
  arr/.style={->, thick, shorten >=4pt, shorten <=4pt}
]

% Column spacing reduced by 25%: 1.4 * 0.75 = 1.05
% Direction vector scales accordingly: x-component 2.1 -> 1.575 (midpoint between col 2 and 3)
% i.e. midpoint of r1-2 and r1-3: x = (1.05 + 2.1)/2 = 1.575
% Arrow from r2-1 at (0,-0.9) to (1.575, 0): direction (1.575, 0.9)

% Row 1 (black): no arrows
\node (r1-1) at (0,    0) {1};
\node (r1-2) at (1.05, 0) {2};
\node (r1-3) at (2.1,  0) {3};
\node (r1-4) at (3.15, 0) {4};

% Row 2 (blue)
\node[blue] (r2-1) at (0,    -0.9) {1};
\node[blue] (r2-2) at (1.05, -0.9) {2};
\node[blue] (r2-3) at (2.1,  -0.9) {3};
\draw[arr, blue] (r2-1) -- (1.575,  0.0);
\draw[arr, blue] (r2-2) -- (2.625,  0.0);
\draw[arr, blue] (r2-3) -- (3.675,  0.0);

% Row 3 (dark red)
\node[purple!70!black] (r3-1) at (0,    -1.8) {1};
\node[purple!70!black] (r3-2) at (1.05, -1.8) {2};
\draw[arr, purple!70!black] (r3-1) -- (1.575, -0.9);
\draw[arr, purple!70!black] (r3-2) -- (2.625, -0.9);

% Row 4 (orange)
\node[orange] (r4-1) at (0, -2.7) {1};
\draw[arr, orange] (r4-1) -- (1.575, -1.8);

\end{tikzpicture}
\end{center}
where each arrow indicates that the integer on the $n$-th row should be placed after the integer on the $(n-1)$-st row as shown on the diagram. For example, the following is an allowed shuffle:
\[
1 \; 2 \; 3 \; \textcolor{blue}{1} \; 
\textcolor{blue}{2} \; 
\textcolor{purple!70!black}{1} \;
4 \; \textcolor{blue}{3} \; 
\textcolor{purple!70!black}{2} \; 
\textcolor{orange}{1}.
\]
\eex

\bre{T-syst} 
\hfill
\begin{enumerate}
\item[(a)] $T$-systems are discrete recurrence relations that often lead to sequences of mutations for cluster algebras coming from mathematical physics and representation theory 
with the property that, at every step, the mutation is done at an index such that  
there are 0, 1 or 2 incoming simple arrows to the corresponding vertex of the valued quiver, see e.g. \cite[Eq. (5)]{HL}.
The layered condition in \deref{Tsyst} 
is present in all such examples that we are aware of.  
\item[(b)]  Each full layered $T$-system has length 
\begin{equation}
\label{choose2}
\begin{pmatrix}
\eta +1 \\
2
\end{pmatrix} := \prod_{t \in \range{\eta}}
\begin{pmatrix}
    \ell(t) +1 \\
    2
\end{pmatrix} 
=
\prod_{t \in \range{\eta}}
\begin{pmatrix}
|\eta^{-1}(t)| +1 \\
2
\end{pmatrix} 
\end{equation}
because for each $t \in \range(\eta)$ the mutation sequence should have precisely 
\[
\ell(t)(\ell(t) + 1)/2 =  
\begin{pmatrix}
\ell(t) + 1 \\
2
\end{pmatrix}
\]
mutations at vertices in $\eta^{-1}(t)$. 
\end{enumerate}
\ere
\bth{T-syst-green} Every full layered $T$-system is a maximal green sequence. 
\eth
\bex{GLSseq} Gei\ss--Leclerc--Schr\"oer described a sequence of mutations of certain seeds of the cluster algebra structure on the coordinate ring of every unipotent cell of a symmetric Kac--Moody algebra \cite[pp. 399-400]{GLS1}. Keller conjectured that those mutation sequences are maximal green sequences \cite[Sect. 4.1]{K3}. This is a direct consequence of \thref{T-syst-green} because those mutation sequences are full layered $T$-systems. They are layered $T$-systems by \cite[Theorem 13.1]{GLS1} and are full because of Steps 
$1, 2, \ldots k$ on pp. 399--400 of \cite{GLS1}. Their lengths are computed on the first display on p. 401 in \cite{GLS1} and are of the form \eqref{choose2} because the level sets of the corresponding $\eta$ function have $t_1, t_2, \ldots t_n$ elements in the notation of \cite{GLS1}. 
\eex
\subsection{Mutations of the $A_n$ quiver} For the proof of \thref{T-syst-green} we will need several simple facts about mutations of $A_n$ quivers. 
Denote by $\overleftarrow{A}_n$ the quiver $1 \leftarrow 2 \leftarrow \ldots \leftarrow n$. 
\ble{Anmutations} Let 
\[
Q_1 := \overleftarrow{A}_n \stackrel{k_1}{\longrightarrow} Q_2 \stackrel{k_2}{\longrightarrow} \ldots \stackrel{k_r}{\longrightarrow} Q_{r+1}
\]
be a sequence of mutations which is a shuffle of the lists 
\begin{align*}
&1, 2, \ldots, n-1, n
\\
&1, 2, \ldots, n-1, 
\\
&\ldots 
\\
& 1,  2,
\\
& 1
\end{align*}
according to the rule of \deref{Tsyst2}. In particular, $r = \begin{pmatrix} n+1 \\ 2 \end{pmatrix}$. Then the following hold:
\begin{enumerate}
\item[(a)] Each of the quivers $Q_i$ is the $A_n$ graph with some orientation of its edges and $k_i$ is a sink of $Q_i$. 
\item[(b)]  The sequence of mutations is a maximal green sequence. 
\end{enumerate}
\ele 
\begin{proof} (a) We prove the statement by induction on $i$, the case $i=1$ being obvious. Assume its validity for the steps before the $i$-th one. Since, 
by the induction hypothesis, for $0 \leq p \leq i -1$, $k_p$ is a sink 
of $Q_p$, the quiver $Q_{p+1}$ is obtained from $Q_p$ by reversing the direction of the arrows adjacent to $k_p$ for all $1 \leq p \leq  i -1$. 
The induction hypothesis for $Q_{i -1}$ implies that $Q_i$ is the $A_n$ quiver with some orientation of its edges. In the passage  
$Q_1 \stackrel{k_1}{\longrightarrow} Q_2 \stackrel{k_2}{\longrightarrow} \ldots \stackrel{k_{i -1}}{\longrightarrow} Q_i$, 
the direction of the arrow $(k_i-1) \leftarrow k_i$  is reversed
\[
|\{k_p \mid 1 \leq p \leq i -1, k_p = k_i-1 \; \mbox{or} \; k_i\}| = 2 |\{k_p \mid 1 \leq p \leq i -1, k_p = k_i \}| +1 
\]
 times, so $Q_i$ has an arrow from $k_i -1$ to $k_i$. Analogously, in the passage from $Q_1$ to $Q_i$, the direction of the arrow $k_i \leftarrow (k_i +1)$ 
 is reversed
 \[
 |\{k_p \mid 1 \leq p \leq i -1, k_p = k_i \; \mbox{or} \; k_i +1 \}|= 2 |\{k_p \mid 1 \leq p \leq i -1, k_p = k_i\}|
 \]
 times, so $Q_i$ has an arrow from $k_i + 1$ to $k_i$. 

Part (b) can be proved in a similar manner. It also follows from the Garver--Musiker results \cite{GM} for maximal green sequences of $A_n$ quivers.  
\end{proof}
\subsection{Proof of \thref{T-syst-green}} 
 
 \bde{truncation} Consider a function $\eta : \ex \to \Zset$ and $t \in \range(\eta)$.
 \begin{enumerate}
 \item[(a)] Define the $t$-{\em{truncation}} of a mutation sequence $\boldsymbol{k} = (k_1, \ldots, k_i)$ 
to be the subsequence of indices that belong to $\eta^{-1}(t)$. It will be denoted by $\boldsymbol{k}\langle t \rangle$. 
\item[(b)] For a valued quiver $Q$ on the set $\ex$, define the $t$-truncation $Q\langle t \rangle$ to be the full valued subquiver with vertex set $\eta^{-1}(t)$. 
\item[(c)] For a valued ice quiver $Q$ on the set $\ex \sqcup \ex'$ with exchangeable vertices $\ex$, define the 
$t$-truncation $Q\langle t \rangle$ to be the full valued ice subquiver with vertex set $\eta^{-1}(t) \sqcup \big( \eta^{-1}(t) \big)'$. 
\end{enumerate}
\ede

For a totally ordered set $S$ of cardinality $n$ denote by 
\[
\overleftarrow{A}(S)
\]
the quiver $\overleftarrow{A}_n$ whose index set $[1,n]$ is replaced by $S$ in the same order.

\bth{ind-Tsyst} Consider a full layered $T$-system as in \eqref{mut-sequence}. For any $1 \leq i \leq r$, denote the mutation subsequence 
\[
\boldsymbol{k}_i := (k_1, \ldots, k_{i-1}).
\]
The following hold for the arrows of the quiver $\mu_{\boldsymbol{k}_i} \big( \, \widehat{Q(B)} \, \big)$:
\begin{enumerate}
\item[(a)] For all $s \neq t \in \range(\eta)$, there are no arrows between any pair of vertices in the sets $\eta^{-1}(s)$ and
$\big(\eta^{-1}(t)\big)'$.   

\item[(b)] For any $t \in \range(\eta)$, the $t$-truncated quiver 
\[
\Big( \mu_{\boldsymbol{k}_i} \big( \, \widehat{Q(B)} \, \big) \Big) \langle t \rangle 
\quad 
\mbox{is canonically isomorphic to} 
\quad \mu_{\boldsymbol{k}_i \langle t \rangle } \Big( \, \widehat{\overleftarrow{A} ( \eta^{-1}(t) )} \, \Big)
\]
via the canonical bijection between the index sets of their vertices. 
\end{enumerate}
\eth
\begin{proof} We first prove by induction on $i$, statement (a) and the statement 
\begin{enumerate}
\item[(b')] For any $t \in \range(\eta)$, the arrows between any pair of vertices in the sets $\eta^{-1}(t)$ and
$\big(\eta^{-1}(t)\big)'$, in the valued ice quivers
\[
\Big( \mu_{\boldsymbol{k}_i} \big( \, \widehat{Q(B)} \, \big) \Big) \langle t \rangle 
\quad 
\mbox{and} 
\quad \mu_{\boldsymbol{k}_i \langle t \rangle } \Big( \, \widehat{\overleftarrow{A} ( \eta^{-1}(t) )} \, \Big)
\]
are the same.
\end{enumerate}
The case $i=1$ is obvious. Denote for brevity 
\[
t_i := \eta(k_i).
\]

The definition of layered $T$-system, the inductive assumption and \leref{Anmutations}(b) 
imply that the arrows of the quiver $\mu_{\boldsymbol{k}_i} \big( \, \widehat{Q(B)} \, \big)$ that are incident to $k_i$ 
are as displayed on Figure \ref{fig:loc}: 
\begin{enumerate}
\item Incoming simple arrows from the vertices $k_i[-1]$ and/or  $k_i[1]$; if one or both of these indices is $\pm \infty$, 
then there are no incoming arrows from it.
\item Outgoing valued arrows to vertices with different $\eta$-value than $t_i$.  
\item Outgoing simple arrows to some vertices $j'_1, j'_2, \ldots, j'_u \in \big( \eta^{-1}(t_i) \big)^{-1}$ and these are precisely 
the arrows between $k_i$ and $\big( \eta^{-1}(t_i) \big)'$ in the quiver $\mu_{\boldsymbol{k}_i \langle t_i \rangle } \Big( \, \widehat{\overleftarrow{A} ( \eta^{-1}(t_i) )} \, \Big)$.
\end{enumerate}
In particular, $k_i$ is a green vertex of the quiver $\mu_{\boldsymbol{k}_i} \big( Q(B) \big)$. 
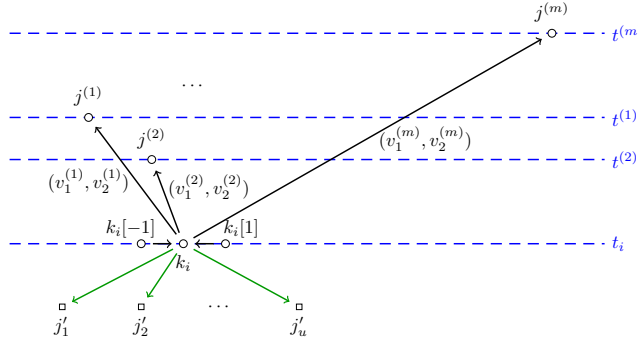
\begin{figure}
    \centering
    
\scalebox{0.7}{
\begin{tikzpicture}[
  every node/.style={font=\small},
  dot/.style={circle, draw, fill=white, inner sep=1.5pt},
  sq/.style={rectangle, draw, fill=white, inner sep=1.5pt},
  dashline/.style={blue, dashed, thick, dash pattern=on 6pt off 4pt},
  arr/.style={->, thick, shorten >=4pt, shorten <=4pt}
]

% Y levels
% t^(m) : y=4
% t^(1) : y=2.4
% t^(2) : y=1.6
% t     : y=0

% --- Dashed horizontal lines (80% longer: 6 -> 10.8) ---
\draw[dashline] (-0.5, 4)   -- (10.8, 4)   node[right] {$t^{(m)}$};
\draw[dashline] (-0.5, 2.4) -- (10.8, 2.4) node[right] {$t^{(1)}$};
\draw[dashline] (-0.5, 1.6) -- (10.8, 1.6) node[right] {$t^{(2)}$};
\draw[dashline] (-0.5, 0)   -- (10.8, 0)   node[right] {$t_i$};

% --- Points ---
\node[dot, label=above:$j^{(m)}$] (js)  at (9.8, 4)   {};
\node[dot, label=above:$j^{(1)}$] (j1)  at (1,   2.4) {};
\node[dot, label=above:$j^{(2)}$] (j2)  at (2.2, 1.6) {};

\node[dot, label={[xshift=-0.2cm, yshift=-0.1cm]above:$k_i[-1]$}] (km1) at (2.0, 0) {};
\node[dot, label={below:$k_i$}]     (k)   at (2.8, 0) {};
\node[dot, label={[xshift=0.3cm, yshift=-0.1cm]above:$k_i[1]$}]  (kp1) at (3.6, 0) {};

% --- Arrows from kappa to j-vertices ---
\draw[arr] (k) -- (js) node[midway, right, xshift=2pt] {$(v_1^{(m)}, v_2^{(m)})$};
\draw[arr] (k) -- (j1) node[midway, left, xshift=(-8pt + 0.3cm)] {$(v_1^{(1)}, v_2^{(1)})$};
\draw[arr] (k) -- (j2) node[midway, right, xshift=(2pt - 0.2cm), yshift=8pt] {$(v_1^{(2)}, v_2^{(2)})$};

% --- Arrows from k[-1] and k[1] to k ---
\draw[arr] (km1) -- (k);
\draw[arr] (kp1) -- (k);

% --- Dots suggesting more lines ---
\node at (3.0, 3.0) {$\cdots$};

% --- New vertices j'_1, ..., j'_u below all lines (y=-1.2) ---
\node[sq, label=below:$j'_1$] (jp1) at (0.5,  -1.2) {};
\node[sq, label=below:$j'_2$] (jp2) at (2.0,  -1.2) {};
\node at (3.5, -1.2) {$\cdots$};
\node[sq, label=below:$j'_u$] (jpu) at (5.0,  -1.2) {};

% --- Green arrows from k to j'-vertices ---
\draw[arr, green!60!black] (k) -- (jp1);
\draw[arr, green!60!black] (k) -- (jp2);
\draw[arr, green!60!black] (k) -- (jpu);

\end{tikzpicture}
}
\caption{Local picture at the mutation}
\label{fig:loc}
\end{figure}

Properties (1)--(3) imply that when performing the mutation
$\mu_{k_i}$ to $\mu_{\boldsymbol{k}_i} \big( \, \widehat{Q(B)} \, \big)$, arrows incident to $k_i$ should be reversed and arrows from $k_i[\pm 1]$ to 
$j^{(1)}, j^{(2)}, \ldots j^{(m)}$ and $j'_1, \ldots, j'_u$ should be added (the first set of arrows should be valued, the second not) and cancellation of pairs of opposite arrows should be performed. 

Therefore, the mutation 
\begin{equation}
\label{mutation-i}
\mu_{\boldsymbol{k}_i} \big( \, \widehat{Q(B)} \, \big) \stackrel{k_i}{\longrightarrow} \mu_{k_i} \mu_{\boldsymbol{k}_i} \big( \, \widehat{Q(B)} \, \big)
\end{equation}
has no effect on the arrows of the quiver 
\begin{enumerate}
\item[(i)] between the sets of vertices $\ex \backslash \eta^{-1}(t_i)$ and $\ex'$, and
\item[(ii)] within $\ex \backslash \eta^{-1}(t_i)$. 
\end{enumerate}

It follows from (i) that the validity of (a) for $i-1$ implies its validity for $i$. It follows from (ii) that the validity of (b') for $i-1$ and $t \neq t_i$ 
implies its validity for $i$ and $t \neq t_i$. 

We are left with proving the induction step for statement (b') for $t = t_i$. Properties (1)--(3) imply that the effect of the mutation \eqref{mutation-i} 
on the arrows between the sets of vertices $\eta^{-1}(t_i)$ and $\big(\eta^{-1}(t_i) \big)$ is the same as the effect of the mutation $\mu_{k_i}$ on the arrows 
of the quiver $\mu_{\boldsymbol{k}_i \langle t_i \rangle } \Big( \, \widehat{\overleftarrow{A} ( \eta^{-1}(t_i) )} \, \Big)$
between pairs of vertices in the same sets. Hence, the validity of (b') for $i-1$ and $t=t_i$ implies its validity for $i$ and $t=t_i$. 

Finally, properties (1)--(3) imply that the effect of the mutation \eqref{mutation-i} on arrows within $\eta^{-1}(t_i)$ is that all arrows incident to $t_i$ are reversed. 
This fact, properties (b') and (ii) and \leref{Anmutations}(a) imply part (b) of the theorem. 
\end{proof}
\noindent
{\em{Proof of \thref{T-syst-green}.}} The fact that $k_i$ is a green vertex of the quiver $\mu_{\boldsymbol{k}_i} \big( Q(B) \big)$
for all $1 \leq i \leq r$ was noted in the proof of \thref{ind-Tsyst}.

Each vertex of $\mu_{\boldsymbol{k}_r} \big( Q(B) \big)$ belongs to one of the $t$-truncations 
$\Big( \mu_{\boldsymbol{k}_r} \big( \, \widehat{Q(B)} \, \big) \Big) \langle t \rangle$. By \thref{ind-Tsyst}(b) the $t$-truncation is isomorphic to 
$\mu_{\boldsymbol{k}_i \langle t \rangle } \Big( \, \widehat{\overleftarrow{A} ( \eta^{-1}(t) )} \, \Big)$, which is the last member associated 
to the maximal green sequence in \leref{Anmutations}(b). 
Therefore, for every $t \in \range(\eta)$, all vertices of the $t$-truncation $\big( \mu_{\boldsymbol{k}_r} ( Q(B) ) \big) \langle t \rangle$
are red. Invoking \thref{ind-Tsyst}(a), we obtain that all vertices of  $\mu_{\boldsymbol{k}_r} \big( Q(B) \big)$ are red. (In the very last part, we could have used 
sign coherence for $c$-vectors, but the presented argument is more elementary and its components are needed for the rest of the proof.)
\qed
\bre{restr} \thref{ind-Tsyst} implies that every full layered $T$-system is a sequence of component preserving mutations in the sense of \cite{BMRYZ} and that at each step the first condition in \cite[Definition 3.6]{BMRYZ} is satisfied.
\ere
%%%%%%%%%%%%
\sectionnew{Maximal green sequences for quantum and Poisson CGL extensions}
In this section we prove the main theorem in the paper constructing maximal green sequences for the quantum and classical cluster algebra structures on symmetric quantum and Poisson CGL extensions.  
\label{resCGL}
\subsection{Paths in $\Xi_N$} 
Recall that $w_\ci$ denotes the longest element of $S_N$,
\[
w_\ci = [N \, (N-1) \, \ldots \, 1] \in S_N.
\]
Obviously, it is an element of $\Xi_N$.
Given $0 \leq k < N$ and a sequence 
\[
k < j_k \leq \cdots \leq j_1 \leq N,
\]
define
\[
\tau_{(j_k, \ldots, j_1)} 
:= (k \, (k+1)\,  \ldots\,  j_k) 
\ldots
(2 \, 3 \, \ldots \, j_2)
(1 \, 2 \, \ldots \, j_1) \in S_N,
\]
where in the right hand side we use the standard notation for cycles in $S_N$. In the one-line notation for elements of $S_N$, 
\begin{equation}
\label{tauj}
\tau_{(j_k, \ldots, j_1)} = 
[\ldots k \ldots (k-1) \ldots 1 \ldots],
\end{equation}
where the dots represent the list of integers $(k+1), (k+2), \ldots, n$ (in this order) and the integers $k, k-1, \ldots, 1$ are in positions $j_k -k +1, \ldots, j_2 -1, j_1$.
The elements \eqref{tauj} belong to $\Xi_N$ 
and every element of $\Xi_N$ is of this form \cite[Lemma 5.5]{GY-Memo}. For example, 
\[
w_0 = \tau_{(N, N,\ldots, N)}, \quad 
\mbox{where} \; k= N-1.
\]

\bre{another-tau}
A slightly different convention was used 
in \cite{GY-Memo} (see Section 5.2 there) where it was allowed that $j_k =k$ but it was required that $k \geq 0$. It is straightforward to check that the collections of $\tau$'s  there and in this paper are the same. In the convention of \cite{GY-Memo}, there is a non-uniqueness of representing the identity permutation $\id$ as a $\tau$; all other elements of $\Xi_N$ have a unique presentation as a $\tau$. In the present convention, every element of $\Xi_N$ has a unique presentation as a $\tau$, which follows at once from \eqref{tauj}. The only element coming from the case $k=0$ is the identity element 
\[
\id = \tau_{\varnothing}. 
\]
\ere
\bde{shuf} A {\em{contiguous path}} is a path in $S_N$ of the form 
\begin{equation}
\label{path}
\boldsymbol{\nu}:= 
\{ \id = \nu_1 \mt \nu_2 \mt \ldots \mt \nu_{M+1} = w_0 \}
\end{equation}
obtained by pulling 1 to the right, 2 to the right, etc., in the one-line notation for 
$\id =[1 \, 2 \, \ldots \, N]$
by following the rules
\begin{enumerate}
\item we can start pulling the integer $k$ to the right if $1, \ldots, k-1$ are already pulled to the right of it and 
\item at each step we interchange only two adjacent integers $m$ and $n$ such that $m < n$ and $m$ stays to the left of $n$. 
\end{enumerate}
\ede
\bre{shuff} In \leref{paths} we prove that every contiguous path is a path in $\Xi_N$. The term contiguous is chosen to reflect the fact that the elements of $\Xi_N$ have the property that they map each interval $[1,k]$ to a contiguous set of integers. 
\ere

It follows from condition (2) in \deref{shuf} that the length of every contiguous path is 
\[
M = 
\begin{pmatrix}
N \\ 2 
\end{pmatrix}.
\]

The simplest contiguous path is the path 
\begin{align}
\id = &[\circled{1} \, \circled{2} \, 3 \, \ldots \, N] \mt \ldots \mt
\nn
\\
&[\circled{2} \, 3 \, \ldots \, N \, \circled{1}] \mt \ldots \mt
\nn
\\
&[3 \, \ldots \, N \, \circled{2} \, \circled{1}] \mt \cdots \mt
\label{pathw0}
\\
&[(N-1) \, N \, \ldots \, 2 \, 1] \mt \ldots \mt
\nn
\\
&[N \, (N-1) \, \ldots \, 2 \, 1] = w_\ci
\nn
\end{align}
The circles indicate the integers that are being pulled to the right. 

\ble{paths} All contiguous paths are paths in $\Xi_N$. 
\ele
\begin{proof} We verify by induction on  $1 \leq i \leq M+1$ that, for every contiguous path as in \eqref{path}, $\nu_i \in \Xi_N$, the case $i=1$ being obvious. If $\nu_i \in \Xi_N$, then 
\[
\nu_i = \tau_{(j_k, \ldots, j_1)}
\]
for some $0 \leq k <N$ and 
$k < j_k \leq \ldots \leq j_1$. Conditions (1)--(2) in \deref{shuf} imply that either 
\[
\nu_{i+1} = \tau_{(j_k, \ldots, (j_s+1), \ldots, j_1)}
\]
and $j_{s-1} \geq  j_s + 1$ or 
\[
\nu_{i+1} = \tau_{(k+2, j_k, \ldots, j_1)}
\]
and $j_k \geq k + 2$, which proves the induction step.
\end{proof}

Consider a contiguous path $\boldsymbol{\nu}$ as in \eqref{path}. Condition (2) in \deref{shuf} implies that, for every step $1 \leq i \leq M$, there exist integers $1 \leq m_i < n_i \leq N$ and $1 \leq p_i \leq N$ 
such that 
\begin{align*}
\nu_i &= [ \ldots m_i \; n_i \ldots],
\\
\nu_{i+1} &= [\ldots n_i \; m_i \ldots],
\end{align*}
where $m_i$ and $n_i$ sit in positions $p_i$ and $p_i +1$ in $\nu_i$, and there are no differences in the other positions of $\nu_i$ and $\nu_{i+1}$. Therefore,
\begin{equation}
\label{nustep}
\nu_{i+1} = (m_i \, n_i) \nu_i = \nu_i s_{p_i}. 
\end{equation}
By iterating \eqref{nustep}, we obtain  
\[
\nu_i = s_{p_1} s_{p_2} \ldots s_{p_{i-1}}
\]
and, in particular,
\begin{equation}
\label{red-expr}
w_\ci = \nu_{M+1} = s_{p_1} s_{p_2} \ldots s_{p_M}.
\end{equation}
Since the length of $w_\ci$ is 
$M = 
\begin{pmatrix}
N \\ 2 
\end{pmatrix}
$, 
$\boldsymbol{w} := s_{p_1} s_{p_2} \ldots s_{p_M}$ is a reduced expression of $w_\ci$ and $\nu_i$ are the initial subwords in it:
\[
\nu_i = \boldsymbol{w}_{\leq i-1}, \quad \forall 1 \leq i \leq M+1. 
\]

Conversely, if $\boldsymbol{w} := s_{p_1} s_{p_2} \ldots s_{p_M}$ is a reduced expression of $w_\ci$ with the property that all of its initial subwords belong to $\Xi_N$, that is
\[
\boldsymbol{w}_{\leq i} \in \Xi_N, \quad 
\forall 1 \leq i \leq M-1,
\]
then one easily shows that
\begin{equation}
    \label{wred}
\id = \boldsymbol{w}_{\leq 0} \mt 
\boldsymbol{w}_{\leq 1} \mt \boldsymbol{w}_{\leq 2} \mt \ldots \mt \boldsymbol{w}_{\leq M} = w_\ci
\end{equation}
is a contiguous path. Indeed, since $\boldsymbol{w}$ is a reduced expression,
\[
\boldsymbol{w}_{\leq i-1}(p_i) <   
\boldsymbol{w}_{\leq i-1}(p_i +1).
\]
If $n_i < m_i$ denote the two integers, then $\boldsymbol{w}_{\leq i} = (n_i \, m_i) \boldsymbol{w}_{\leq i- 1}$ and $\boldsymbol{w}_{\leq i}$ is obtained from $\boldsymbol{w}_{\leq i- 1}$ by pulling $n_i$ to the right of $m_i$ in the one-line notation, 
showing that condition (2) in \deref{shuf} holds. Condition (1) in \deref{shuf} is verified recursively by using the assumption on the initial subwords. 

This gives us the following characterization of contiguous paths:

\bpr{shuf} There is a bijection between the set of contiguous paths (in $\Xi_N$) and the reduced expressions $\boldsymbol{w}$ of the longest element $w_\ci \in S_N$ such that each initial subword $\boldsymbol{w}_{\leq i}$ is in $\Xi_N$ for $1 \leq i \leq M-1$. The bijection is given by 
sending $\boldsymbol{w}$ to the path \eqref{wred}.
\epr
\subsection{Maximal green sequences from contiguous paths} Let $R$ be a quantum or Poisson CGL extension subject to the mild assumptions in Theorems \ref{tcluster} and \ref{tcluster-P}, respectively. By those two theorems, we have a collection of quantum (classical) seeds $\Sigma_\tau$ indexed by the elements $\tau$ of the subset $\Xi_N \subset S_N$. Denote by $B_\tau$ the principal part of the exchange matrix $\B_\tau$ of each of these seeds $\Sigma_\tau$, and set $B:= B_{\id}$ for the initial seed. 

Consider a contiguous path $\boldsymbol{\nu}$ as in \eqref{path}. By the results in the previous subsection, 
\[
\nu_i = \nu_{i-1} s_{p_i}, \quad \forall 1 \leq i \leq M = 
\begin{pmatrix}
N \\
2
\end{pmatrix}
\]
for a sequence $p_1, \ldots, p_M$ in [1,N] such that $\boldsymbol{w}:=s_{p_1} \ldots s_{p_M}$ is a reduced expression of $w_\ci$ (so, $\nu_i = \boldsymbol{w}_{\leq i-1}$). Recall the notation \eqref{taub}. Parts (b) of Theorems \ref{tcluster} and \ref{tcluster-P} imply that we have the following:
\begin{enumerate}
\item[(i)]
If $\eta(\nu_{i-1} (p_i)) \neq \eta (\nu_{i-1}(p_{i}+1))$, then $\Sigma_{\nu_i} = \Sigma_{\nu_{i-1}}$. In particular, 
\begin{equation}
\label{B=B}
\eta(\nu_{i-1} (p_i)) \neq \eta (\nu_{i-1}(p_{i}+1)) \quad \Rightarrow \quad B_{\nu_i} = B_{\nu_{i-1}}.
\end{equation}
\item[(ii)] If $\eta(\nu_{i-1}(p_i)) = \eta (\nu_{i-1}(p_i+1))$, 
then $\Sigma_{\nu_i} = \mu_{p_{i,\bullet}}(\Sigma_{\nu_{i-1}})$, where 
\[
p_{i,\bullet} = (\nu_i)_\bullet \nu_i(p_i).
\]
In particular,
\begin{equation}
    \label{BmuB}
   \eta(\nu_{i-1}(p_i)) = \eta (\nu_{i-1}(p_i+1))
   \quad \Rightarrow \quad 
   B_{\nu_i} = \mu_{p_{i,\bullet}}(B_{\nu_{i-1}})
\end{equation}
\end{enumerate}
Denote 
\[
\mathrm{seq}(\boldsymbol{\nu})_i
:= 
\begin{cases}
(\nu_i)_\bullet \nu_i(p_i), &\mbox{if} \; \; \eta(\nu_{i-1}(p_i)) = \eta (\nu_{i-1}(p_i+1))
\\
\varnothing, &\mbox{otherwise}. 
\end{cases}
\]
Finally, for each contiguous path $\boldsymbol{\nu}$, 
define the mutation sequence
\[
\mathrm{seq}(\boldsymbol{\nu}) =
\mathrm{seq}(\boldsymbol{\nu})_1 \, 
\mathrm{seq}(\boldsymbol{\nu})_2 \, 
\ldots \, 
\mathrm{seq}(\boldsymbol{\nu})_M.
\]

\bth{CGLgreen} Let $R$ be a quantum or Poisson CGL extension subject to the mild assumptions in Theorems \ref{tcluster} and \ref{tcluster-P}. Then, for every contiguous path $\boldsymbol{\nu}$, 
the mutation sequence $\mathrm{seq}(\boldsymbol{\nu})$ is a maximal green sequence
for the principal part $B$ of the exchange matrix of the initial seed $\Sigma$ of the quantum or classical cluster algebra structure on $R$ from 
Theorems \ref{tcluster} and \ref{tcluster-P}. The lengths of all such maximal green sequences equal 
\[
\begin{pmatrix}
\eta +1 \\
2
\end{pmatrix}
\]
in the notation \eqref{choose2}. 
\eth 

For example, the simplest contiguous path \eqref{pathw0} gives rise to a maximal green sequence.

\thref{CGLgreen} follows from \thref{T-syst-green}, \reref{T-syst}(b) and the following result:
\bpr{CGLtoTsyst} For every contiguous path $\boldsymbol{\nu}$ for a quantum or Poisson CGL extension satisfying the assumptions of Theorems \ref{tcluster} and \ref{tcluster-P}, the mutation sequence $\mathrm{seq}(\boldsymbol{\nu})$ is a full layered $T$-system for the function $\eta$ from Theorems \ref{tcluster} and \ref{tcluster-P}. 
\epr
\begin{proof} The mutation sequence $\mathrm{seq}(\boldsymbol{\nu})$ is a layered $T$-system due to  Theorems \ref{tcluster}(c) and \ref{tcluster-P}(c). The choice of sign of exchange matrices in those theorems, discussed in \reref{negative}, plays a role here. Otherwise we would have 
outgoing arrows from the mutation vertex $k$ to $k[-1]$ and $k[1]$ instead of incoming arrows. 

We are left with showing the fullness property of $\mathrm{seq}(\boldsymbol{\nu})$.  
Fix an integer $1 \leq k \leq N$. 
Consider the steps in the path $\boldsymbol{\nu}$ which move $k$ one position to the right in the one-line notation for permutations. Condition (2) of \deref{shuf} gives that before those steps occur, the integers $1, 2, \ldots, k-1$ were pulled to the right of $k$. By condition (1) of the definition, the steps we are considering are moving $k$ to the right of $k+1, k+2, \ldots, N$ in the same order (but there could be various other steps in between). The steps where $k$ is moved over $j>k$
and $\eta(k) \neq \eta(j)$ do not result in any mutations by Eq. \eqref{B=B}. Denote     
\[
\eta^{-1}(\eta(k)) = \{ j_1 < j_2 < \ldots < j_l \} \subseteq \{ k, k[1], \ldots, k[O^+(k)] \},
\]
recall \eqref{order}. Eq. \eqref{BmuB} implies that the steps when $k$ is consecutively moved to the right of $k[1], k[2], \ldots, k[O_+(k)]$, give the mutation sequence
\[
j_1 \, j_2 \, \ldots \, j_{O_+(k)}.
\]
(Note, that this sequence is empty if $O_+(k)=0$.)
In other words, the mutation sequence $\mathrm{seq}(\boldsymbol{\nu})$ is a shuffle of all such sequences corresponding to the integers $1 \leq k \leq N$. The property that the mutation sequence satisfies the shuffle rule in \deref{Tsyst2} follows from conditions (1) and (2) of \deref{Tsyst2}.  
\end{proof}
%%%%%%%%%%%%%%%%%%%%%%%%%%%%%%%%

\end{document}